\newtheorem{theorem}{Theorem}[section]
\newtheorem{lem}[theorem]{Lemma}
\newtheorem{prop}[theorem]{Proposition}
\newtheorem{cor}[theorem]{Corollary}
\theoremstyle{definition}
\newtheorem{definition}[theorem]{Definition}
\theoremstyle{remark}
\newtheorem{remark}[theorem]{Remark}
\numberwithin{equation}{section}
\newcommand{\ra}{\rightarrow}
\newcommand{\C}{{\mathbb{C}}}
\newcommand{\HH}{{\mathbb{H}}}
\newcommand{\CP}{{\mathbb{CP}}}
\newcommand{\HP}{{\mathbb{HP}}}
\newcommand{\RP}{{\mathbb{RP}}}
\newcommand{\KK}{\mathbb{K}}
\newcommand{\KP}{\mathbb{KP}}
\newcommand{\Z}{\mathbb{Z}}
\newcommand{\dimm}{\text{dim}}
\newcommand{\lb}{\langle}
\newcommand{\rb}{\rangle}
\newcommand{\mg}{\mathfrak{g}}
\newcommand{\mh}{\mathfrak{h}}
\newcommand{\mk}{\mathfrak{k}}
\newcommand{\mm}{\mathfrak{m}}
\newcommand{\ma}{\mathfrak{a}}
\newcommand{\ml}{\mathfrak{l}}
\newcommand{\mf}{\mathfrak{f}}
\renewcommand{\mp}{\mathfrak{p}}
\newcommand{\mt}{\mathfrak{t}}
\newcommand{\ad}{\text{ad}}
\newcommand{\R}{\mathbb{R}}
\begin{document}

\newcommand{\spacing}[1]{\renewcommand{\baselinestretch}{#1}\large\normalsize}
\spacing{1.14}

\title{A note on quasi-positive curvature conditions}

\thanks{$^\ast$Supported in part by NSF grant DMS--0902942.}
\author {Megan M. Kerr}
\author {Kristopher Tapp$^\ast$}
\begin{abstract} We classify the triples $H \subset K \subset G$ of nested compact Lie groups which satisfy the ``positive triple'' condition that was shown in~\cite{Tapp} to ensure that $G/H$ admits a metric with quasi-positive curvature.  A few new examples of spaces that admit quasi-positively curved metrics emerge from this classification; namely,  a $\CP^2$-bundle over $S^6$, a $B^7$-bundle over $\HP^2$, a $\CP^{2n-1}$-bundle over $\HP^{n}$ for each $n\geq 2$, and a family of finite quotients of $T^1S^6$.
\end{abstract}
\address{Department of Mathematics\\ Wellesley College \\ Wellesley, MA 02481}
\email{mkerr@wellesley.edu}
\address{Department of Mathematics\\ Saint Joseph's University\\
5600 City Ave.\\
Philadelphia, PA 19131}
\email{ktapp@sju.edu}

\maketitle

\section{Introduction}
The study of simply connected Riemannian manifolds with positive sectional curvature has long been considered important in Riemannian geometry.  Nevertheless, very few examples have been found.  Aside from the rank one symmetric spaces, the first examples discovered were homogeneous spaces of dimension 6, 7, 12, 13 and 24 due to Berger~\cite{Berger}, Wallach~\cite{Wallach}, and Aloff-Wallach~\cite{AW}, followed by biquotients of dimension 6, 7 and 13 due to Eschenburg~\cite{E1} and Bazaikin~\cite{Ba}.  More recently, a 7-dimensional cohomogeneity-one manifold was endowed with positive curvature by Grove-Verdiani-Ziller~\cite{GVZ} and independently by Dearicott~\cite{D}, and an exotic sphere with positive curvature was constructed by Petersen-Wilhelm~\cite{PW}.  See Ziller~\cite{Ziller} for a survey of known examples and their constructions.

In contrast, there are many more manifolds which are known to admit Riemannian metrics of nonnegative sectional curvature, including every compact homogeneous space.  In order to better understand the gap between nonnegative and positive curvature, recent attention has focused on the construction of nonnegatively curved manifolds with strictly positive curvature either at a point (called \emph{quasi-positive curvature}) or on an open dense set of points (called \emph{almost-positive curvature}).  For example, Gromoll and Meyer discovered an exotic sphere with quasi-positive curvature~\cite{GM}, which was later proven to admit almost-positive curvature~\cite{Fred},\cite{Esch},\cite{Esch2}.  Petersen and Wilhelm endowed $T^1S^4$ and a 6-dimensional quotient of $T^1S^4$ with almost-positive curvature~\cite{PW}.  Kerin discovered many examples of almost-positive curvature among Eschenburg and Bazaikin biquotients and other spaces~\cite{K1},\cite{K2}.

Wilking discovered several infinite families of homogeneous bundles whose total spaces admit almost-positive curvature~\cite{Wilking}.  His examples include the projective tangent bundles of all projective spaces.  The second author then found some related infinite families of homogeneous bundles whose total spaces admit quasi-positive curvature, including the unit tangent bundles of all projective spaces~\cite{Tapp}.  These homogeneous bundle examples from~\cite{Wilking} and~\cite{Tapp}, which we will enumerate in Section 2, have in common  that they come from \emph{positive triples}, defined as follows:
\begin{definition}\label{D:postrip} A triple $H\subset K\subset G$ of nested compact Lie groups (with Lie algebras
$\mh\subsetneq \mk\subsetneq \mg$) is called a \emph{positive triple} if the following three conditions are satisfied:
\begin{enumerate}
\item[(i)] No pair of linearly independent vectors in $\mp=\mg\ominus\mk$ commute.
\item[(ii)]  No pair of linearly independent vectors in $\mm=\mk\ominus\mh$ commute.
\item[(iii)]  There exists $A\in\mg$ such that $[Z^\mm,[A,W]^\mk]\neq 0$ for all linearly independent vectors $Z\in\mm\oplus\mp$ and $W\in\mp$ for which $[Z,W]=0$.
\end{enumerate}
In this case, we also refer to the triple of Lie algebras $\mh\subset\mk\subset\mg$ as a \emph{positive triple}.  Here  ``$\ominus$'' denotes the orthogonal compliment, and superscripts denote orthogonal projections, all with respect to a bi-invariant metric on $G$.
\end{definition}
Notice that the homogeneous space $M=G/H$ is topologically a bundle over $B=G/K$ with fiber $F=K/H$.
The first two conditions say that the base space $B$ and the fiber $F$ both have positive sectional curvature with respect to their normal homogeneous metrics.  The third condition represents a derivative guarantee that positively curved points in $M$ (with respect to a certain natural nonnegatively curved metric) are reached by moving away from the identity coset in the direction of $A$.  Therefore:
\begin{theorem}\cite{Tapp}\label{ipad} If $H\subset K\subset G$ is a positive triple, then $M=G/H$ admits an inhomogeneous metric with quasi-positive curvature.
\end{theorem}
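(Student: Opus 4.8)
\emph{Overview.} The plan is to write down an explicit nonnegatively curved metric on $M=G/H$, obtained from a bi-invariant metric on $G$ by two successive Cheeger deformations, and then to produce a single point at which every $2$-plane is positively curved, with conditions (i)--(iii) ruling out flat planes there. Fix a bi-invariant metric $Q$ on $G$. First Cheeger-deform $(G,Q)$ with respect to the right action of $K$, obtaining a family $\langle\cdot,\cdot\rangle_l$ ($l>0$) of left-$G$-invariant, right-$K$-invariant, nonnegatively curved metrics on $G$ (Cheeger deformations preserve $\sec\geq 0$). Since $H\subset K$, each $\langle\cdot,\cdot\rangle_l$ is right-$H$-invariant, so it descends to a $G$-homogeneous metric $g_l$ on $M$; this is the ``natural nonnegatively curved metric'' of the Remark. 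Next Cheeger-deform $(G,\langle\cdot,\cdot\rangle_l)$ with respect to the \emph{left} action of the one-parameter group $\exp(\mathbb{R}A)$, with $A$ as in (iii), using a second parameter $s$. As left and right translations commute, the new metric is still right-$K$-invariant, hence right-$H$-invariant, and descends to a metric $g_{l,s}$ on $M$ which is again nonnegatively curved, being the image of a Riemannian submersion from $(G,\langle\cdot,\cdot\rangle_{l,s})$. For $s>0$ only the left $\exp(\mathbb{R}A)$-translations manifestly act isometrically, so $g_{l,s}$ is the desired inhomogeneous metric.

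\emph{The flat planes of $g_l$.} Work at $p_0=eH$, where $T_{p_0}M\cong\mm\oplus\mp$. Combining the curvature formula for the first Cheeger deformation with O'Neill's formula for $G\to M$ shows that a plane $\sigma\subset\mm\oplus\mp$ can satisfy $\sec_{g_l}(\sigma)=0$ only if it is spanned by a commuting pair $X,Y$ whose $\mk$-components $X^\mk,Y^\mk$ also commute. Since $X^\mk=X^\mm$ for $X\in\mm\oplus\mp$, condition (ii) forces $X^\mm$ and $Y^\mm$ to be linearly dependent, so every $g_l$-flat plane at $p_0$ contains a nonzero vector $W\in\mp$; that is, $\sigma=\mathrm{span}\{Z,W\}$ with $W\in\mp\setminus\{0\}$, $Z\in\mm\oplus\mp$ independent of $W$, and $[Z,W]=0$ --- exactly the configuration quantified over in (iii). (Condition (i), asserting positivity of the base $B=G/K$, is itself a special case of (iii) and also reappears in the bookkeeping below.)

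\emph{Killing the flat planes; the main obstacle.} The heart of the matter is to compute, to first order in $s$, the effect of the left-$\exp(\mathbb{R}A)$ Cheeger deformation on each such plane $\sigma=\mathrm{span}\{Z,W\}$, or rather on its analogue at the point $p_s=\exp(sA)\cdot p_0$ reached by moving off the identity coset in the direction of $A$. A Cheeger deformation strictly increases the curvature of a formerly flat plane unless the relevant action fields are flat along it; carrying out the computation, $\sec_{g_{l,s}}$ of this plane works out to $s$ times a positive constant times a norm-squared quantity which, after the reweighting of the $\mk$-directions by the first deformation is taken into account, equals $\|[Z^\mm,[A,W]^\mk]\|^2$. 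By (iii) this is nonzero, so for a suitable parameter range ($l$ large, then $s>0$ small depending on $l$) every $2$-plane at $p_s$ is positively curved and $(M,g_{l,s})$ has quasi-positive curvature. I expect the main obstacle to be precisely this last computation: stacking the two Cheeger-deformation curvature formulas together with the submersion formula, accounting for \emph{all} flat planes at the shifted point $p_s$ and checking the second deformation creates no new ones there (where condition (i) is used), and isolating the leading positive term so that it reproduces the expression $[Z^\mm,[A,W]^\mk]$ of (iii); managing the interplay between the parameters $l$ and $s$ is the remaining delicate point.
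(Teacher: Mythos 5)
The paper you were given does not actually prove this theorem---it is quoted from the reference [Tapp]---so I am comparing your sketch with the construction carried out there. Your first step (Cheeger deformation along the right $K$-action, descent to the homogeneous metric $g_l$ on $M$, and the identification of the $g_l$-flat planes at $p_0=eH$ with configurations $\mathrm{span}\{Z,W\}$, $W\in\mp$, $[Z,W]=0$) is sound. The second step, however, defeats itself. A Cheeger deformation with respect to the left action of $T=\overline{\exp(\R A)}$ produces a metric that is invariant under left translation by $T$ (the acting group always acts by isometries of its own Cheeger deformation), and this invariance survives the descent to $M=G/H$ since left and right translations commute. Hence $L_{\exp(sA)}$ is an isometry of $(M,g_{l,s})$ carrying $p_0$ to $p_s=\exp(sA)\cdot p_0$: every point you propose to move to is an isometric image of $p_0$. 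No curvature can be gained by ``moving away from the identity coset in the direction of $A$'' for this metric; if $g_{l,s}$ has a flat plane at $p_0$ it has one at every $p_s$, so the anticipated first-order gain $s\cdot c\cdot\|[Z^\mm,[A,W]^\mk]\|^2$ at $p_s$ cannot be right. Worse, the deformation cannot remove the flat planes at $p_0$ either: by the standard description of the zero-curvature planes of a Cheeger deformation (M\"uter; see Ziller's survey), they correspond to zero-curvature planes of the undeformed metric satisfying an extra bracket condition in the Lie algebra of the acting group, and for the abelian group $T$ that condition is vacuous. So all the $g_l$-flat planes you found at $p_0$ persist for $g_{l,s}$, and your metric is nonnegatively but not quasi-positively curved (unless $g_l$ was already positively curved, the uninteresting case).

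The construction in [Tapp] (following Wilking) avoids this by deforming on the left by the \emph{full} group $K$, not by a one-parameter group containing $A$; equivalently, the metric is the one induced on $\Delta G\backslash (G\times G)/(\{1\}\times H)\cong G/H$ from $g_1\oplus g_2$ with both factors Cheeger-deformed along $K$. This metric is invariant under left $K$ only (this is exactly the invariance from Remark 2.1 of [Tapp] that the present paper uses to prove Corollary~\ref{zk}), and the point $\exp(tA)H$ leaves the $K$-orbit $K/H$ of $eH$, so moving in the direction of $A$ genuinely changes the isometric type of the point. The vector $A$ enters through the choice of point, not through the metric: one writes down the equations characterizing a zero-curvature plane at $\exp(tA)H$, notes that at $t=0$ their solutions are exactly the configurations $(Z,W)$ quantified in (iii), differentiates in $t$ at $0$ to produce the term $[Z^\mm,[A,W]^\mk]$, and uses (iii) together with a compactness argument over the (compact) set of such configurations to conclude positive curvature at $\exp(tA)H$ for all small $t\neq 0$. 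If you want to keep your two-deformation framework, the repair is to replace $\overline{\exp(\R A)}$ by $K$ in the second deformation and then run this moving-point, derivative-plus-compactness argument; as written, the key computation you flag as the ``main obstacle'' does not exist.
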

The goal of this paper is to classify all positive triples.  The following new examples arise from the classification:
\begin{theorem}\label{T:E}
The following are positive triples:
\begin{enumerate}
\item $SU(3)\subset G_2\subset Spin(7)$,
\item $SU(2)\subset SU(3)\subset G_2$,
\item $U(2)\subset SU(3)\subset G_2$,
\item $\{\text{diag}(z,\eta,A)\mid z\in Sp(1),\eta\in U(1), A\in Sp(n-1)\}\subset Sp(1)\times Sp(n)\subset Sp(n+1)$,
$(n\geq 2)$,
\item $SO(3)_{max}\times Sp(1)\subset Sp(2)\times Sp(1)\subset Sp(3)$.
\end{enumerate}
\end{theorem}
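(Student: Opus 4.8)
The plan is to verify conditions (i), (ii) and (iii) of Definition~\ref{D:postrip} for each of the five triples; conditions (i) and (ii) are essentially bookkeeping, and (iii) is where the work lies.

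\emph{Conditions (i) and (ii).} For a normal homogeneous space one has $K(X,Y)=0$ exactly when $[X,Y]=0$, so (i), respectively (ii), holds precisely when $G/K$, respectively $K/H$, has positive sectional curvature in its normal homogeneous metric. For the five triples the relevant base and fiber are, in order, $S^7$ and $S^6$; $S^6$ and $S^5$; $S^6$ and $\CP^2$; $\HP^n$ and $Sp(n)/(U(1)\cdot Sp(n-1))\cong\CP^{2n-1}$; and $\HP^2$ and the Berger space $Sp(2)/SO(3)_{max}$. The round spheres and $\HP^n$ are immediate, the Berger space is classically positively curved in its normal metric~\cite{Berger}, and for the complex projective fibers one checks that the normal metric induced from $SU(3)$, respectively from $Sp(n)$, is (up to scale) the Fubini--Study metric --- in the latter case because $Sp(n)/(U(1)\cdot Sp(n-1))$ is the round $S^{4n-1}$ modulo the complex Hopf action ($U(1)\subset Sp(1)$ acting on $\HH^n=\C^{2n}$ by right multiplication by unit complex numbers, i.e. by scalars). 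Each of these facts can alternatively be read off from a direct bracket computation in a matrix model (or, for the $G_2$ and $Spin(7)$ cases, in an octonion model), and I would include those computations for completeness.

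\emph{Reducing condition (iii).} Let $Z\in\mm\oplus\mp$ and $W\in\mp$ be linearly independent with $[Z,W]=0$, and write $Z=Z^{\mm}+Z^{\mp}$. Since $[\mk,\mp]\subseteq\mp$, the $\mk$-component of $[Z,W]$ is $[Z^{\mp},W]^{\mk}$, so $[Z^{\mp},W]^{\mk}=0$. By (i) there are then two cases: either $Z^{\mp}$ is a scalar multiple of $W$, whence $Z^{\mm}\neq 0$ and $[Z^{\mm},W]=0$; or $[Z^{\mp},W]\neq 0$, which forces $[Z^{\mp},W]^{\mp}\neq 0$, and then the $\mp$-component of $[Z,W]=0$ gives $[Z^{\mm},W]=-[Z^{\mp},W]^{\mp}\neq 0$. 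The second case is vacuous when $G/K$ is symmetric, which covers triples (4) and (5); for triples (1)--(3), where $G/K$ is $S^6$ or $S^7$ in a non-symmetric presentation, it is controlled by the description of $[\mp,\mp]^{\mp}$ as a multiple of the octonionic cross product. In either case it suffices to exhibit one $A\in\mg$ with $[Z^{\mm},[A,W]^{\mk}]\neq 0$ for every surviving pair $(Z,W)$, and these pairs range over a low-dimensional, explicitly parametrizable family.

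\emph{Executing condition (iii).} In the $Sp$-triples I would take $A$ to be a fixed unit vector in $\mp$, namely $A=e_{1}$. Writing $A$ and $W$ as off-diagonal blocks, a short matrix computation gives $[A,W]=\text{diag}(wa^{*}-aw^{*},\,w^{*}a-a^{*}w)\in\mk$, so $[A,W]^{\mk}=[A,W]$ and the part of it that acts nontrivially on $Z\in\mm$ is $wa^{*}-aw^{*}$; if $Z\in\mm$ annihilates $W$ as a matrix --- which by the reduction is the case we must treat --- then $[Z,wa^{*}-aw^{*}]=wb^{*}-bw^{*}$ with $b=Za$, and this vanishes iff $Za\in\R w$. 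With $a=e_{1}$, $Za=Ze_{1}$ is the first column of $Z$, and the required non-vanishing follows from the explicit shape of $\mm$: for triple (4), for instance, the kernel of any nonzero $Z\in\mm$ lies in $\{w_{1}=0\}$ (with, in some cases, a further Hermitian-orthogonality constraint), so that the first column of $Z$ is never a scalar multiple of a nonzero element of $\ker Z$. An entirely parallel argument, using the decomposition $\mg_{2}=\mathfrak{su}(3)\oplus\C^{3}$ and the moment-map form of the bracket $[\C^{3},\C^{3}]^{\mathfrak{su}(3)}$, settles the main case of triple (3), and the same package of facts about $\mg_{2}\subset\mathfrak{spin}(7)$ is then reused for triples (1) and (2).

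\emph{The main obstacle.} Condition (iii) for the exceptional triples is the crux. For (1) and (2) the base is non-symmetric, so the second case of the reduction genuinely occurs, and excluding $[Z^{\mm},[A,W]^{\mk}]=0$ on the planes where $[Z^{\mp},W]^{\mk}$ already vanishes requires concrete octonion multiplication together with the explicit chain $\mathfrak{su}(3)\subset\mg_{2}\subset\mathfrak{spin}(7)$. For triple (5) the subgroup $SO(3)_{max}\subset Sp(2)$ is the irreducible one, so $\mm=\mathfrak{sp}(2)\ominus\mathfrak{so}(3)$ is the seven-dimensional irreducible $SO(3)$-module $\mathrm{Sym}^{6}$; the obstructing pairs correspond to the rank-one elements of $\mm$, and parametrizing these --- equivalently, understanding how the maximal torus and root space of $\mathfrak{so}(3)_{max}$ embed in $\mathfrak{sp}(2)$ --- and then checking that their kernels avoid the first column of $Z$ is the most delicate point. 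These two computations are where essentially all of the effort goes.
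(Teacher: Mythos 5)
Your outline gets the easy parts right (conditions (i) and (ii), the reduction of (iii) to commuting pairs with $[Z^\mp,W]^\mk=0$, and the explicit $sp$-bracket computation that, as written, does dispose of triple (4)), but the proof of the theorem is not actually there: for triples (1), (2) and (5) --- precisely the cases where the content of the theorem lies --- you yourself flag the needed octonionic computations for $su(3)\subset\mg_2\subset spin(7)$ and the parametrization of the ``rank-one'' elements of the $7$-dimensional irreducible $SO(3)_{max}$-module inside $sp(2)$ as ``where essentially all of the effort goes,'' and none of it is carried out. The same is true of the $\mg_2=su(3)\oplus\C^3$ argument promised for triple (3). As submitted, this is a plan for a brute-force verification, with the decisive steps deferred; a referee could not check condition (iii) for (1), (2), (3) or (5) from what is written.

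You are also missing the structural observations that make these verifications essentially free, which is how the paper proceeds. First, triples (3) and (4) require no computation at all: they are obtained from triple (2) and from Triple~\ref{T4}, respectively, by enlarging $H$, which manifestly preserves all three conditions. Second, for the remaining triples the isotropy representation of $K$ on $\mp$ is transitive on the unit sphere ($SU(3)$ on $\C^3$, $G_2$ on $\R^7$, $Sp(2)\times Sp(1)$ on $\HH^2$); Proposition~\ref{help} of the paper shows that under hypotheses (i), (ii) and this transitivity, condition (iii) holds for \emph{any} fat vector $A\in\mp$. In particular your troublesome ``second case'' ($Z^\mp$ not parallel to $W$ with $[Z^\mp,W]^\mk=0$) never occurs, because $[\mk,Z^\mp]$ is the full tangent space to the sphere at $Z^\mp$ --- no octonionic cross-product analysis is needed. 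A fat vector is then exhibited with almost no work: for (1) and (2), $G$ acts transitively on $T^1S^\ell$, so one may choose $A\in T_pS^\ell\cong\mp$ whose isotropy algebra in $\mk$ is exactly $\mh$, i.e.\ $\ker(\ad_A\colon\mk\to\mp)=\mh$; for (5), a one-line check against Wilking's explicit basis of $so(3)_{max}\subset sp(2)$ shows $A=(1,0)\in\HH^2\cong\mp$ is fat, since $\ker(\ad_A)\cap sp(2)=\{\mathrm{diag}(0,q)\}$ meets $\mm$ trivially. If you intend to keep your computational route, you must actually supply the $G_2$, $Spin(7)$ and $SO(3)_{max}$ calculations; otherwise, proving and invoking a statement like Proposition~\ref{help} is the efficient repair.
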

Triple (1) gives $M=T^1 S^7$, and Triple (2) gives $M=T^1 S^6$, both of which were already known to admit almost-positive curvature, so these examples are not topologically new.  Triple (3) is obtained from (2) by enlarging $H$, which preserves the positivity property; it gives a $\CP^2$-bundle over $S^6$ which is a new example of quasi-positive curvature.  Triple (4) gives a $\CP^{2n-1}$-bundle over $\HP^{n}$ for each $n \geq 2$; it is a family of new examples.  Triple (5) gives a bundle over $\HP^2$ whose fiber is the Berger space $B^7$.  It is also a new example.

Even though Triple (2) yields the known example $M=T^1S^6$, the resultant new metric on $T^1S^6$ admits an isometric free $S^1$-action, whose quotient is the above-mentioned $\CP^2$-bundle over $S^6$.  Consider the sub-action by the finite cyclic group $\Z_k\subset S^1$:
\begin{cor}\label{zk}
For each positive integer $k$, the quotient of $T^1S^6$ by a free $\Z_k$-action admits quasi-positive curvature.
\end{cor}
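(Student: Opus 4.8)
The plan is to realize $(T^1S^6)/\Z_k$ as a Riemannian quotient of $T^1S^6$ equipped with the quasi-positively curved metric produced by Theorem~\ref{ipad} applied to Triple~(2), and then to transport the curvature condition across the quotient map, which is a finite Riemannian covering and hence a local isometry.

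First I would pin down the free isometric $S^1$-action on $T^1S^6$ referred to just before the statement. With $H=SU(2)$, $K=SU(3)$, $G=G_2$ as in Triple~(2), we have $SU(2)\triangleleft U(2)$ and $U(2)\subset SU(3)=K$ --- this inclusion being exactly the enlargement of $H$ that produces Triple~(3) --- so right translation by $U(2)$ on $G$ descends to a well-defined right action of $U(2)/SU(2)\cong S^1$ on $G/H=T^1S^6$, whose quotient is $G_2/U(2)$, the $\CP^2$-bundle over $S^6$. The action is free, since the stabilizer in $U(2)$ of a coset $g\,SU(2)$ is $\{u\in U(2)\mid gu\,SU(2)=g\,SU(2)\}=SU(2)$; and by the fact recorded before the statement it is by isometries of the new metric.

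Next I would fix $k\geq 1$ and let $\Z_k=\{z\in S^1\mid z^k=1\}$ act by restriction of this $S^1$-action. As a subgroup of $S^1$ it acts freely and by isometries, and since it is finite the quotient $X_k:=(T^1S^6)/\Z_k$ is a smooth closed manifold and the projection $\pi\colon T^1S^6\to X_k$ is a $k$-fold Riemannian covering, in particular a local isometry. (Because $T^1S^6$ is simply connected, $\pi_1(X_k)\cong\Z_k$, so the $X_k$ are pairwise non-homeomorphic.) Finally I would push the curvature down through $\pi$: for a $2$-plane $\sigma\subset T_xX_k$, choose $\tilde x\in\pi^{-1}(x)$ and set $\tilde\sigma=(d\pi_{\tilde x})^{-1}(\sigma)$, so that $\sec_{X_k}(\sigma)=\sec_{T^1S^6}(\tilde\sigma)\geq 0$; and if $\tilde q\in T^1S^6$ is a point of positive curvature for the new metric (which exists by Theorem~\ref{ipad}), then every $2$-plane at $q=\pi(\tilde q)$ has positive curvature. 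Hence $X_k$ carries a metric of quasi-positive curvature.

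There is no serious obstacle once the free isometric $S^1$-action is in hand; the only ingredient requiring genuine verification --- namely that the metric from~\cite{Tapp} is invariant under the full group $U(2)$ and not merely under right translation by $H=SU(2)$ --- is precisely the assertion recorded immediately before the corollary, and everything after that is a formal consequence of the fact that quasi-positive curvature passes to quotients by finite free isometric group actions.
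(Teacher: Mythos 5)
Your proposal is correct and takes essentially the same route as the paper: both equip $T^1S^6=G_2/SU(2)$ with the metric of Theorem~\ref{ipad} for Triple (2), exhibit the same free isometric circle action by right translations (the paper presents it via the center $L$ of $U(2)$ acting almost freely with ineffective kernel $L\cap H=\Z_2$, you via $U(2)/SU(2)$; the effective actions coincide, with quotient the $\CP^2$-bundle $G_2/U(2)$), and then restrict to $\Z_k\subset S^1$. The only differences are presentational: the paper grounds the isometry of this action in \cite[Remark~2.1]{Tapp} (invariance of the metric under $K=SU(3)$), whereas you invoke the assertion stated just before the corollary, and you make explicit the descent of quasi-positive curvature through the finite Riemannian covering, which the paper leaves implicit.
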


This paper is organized as follows.  In Section 2, we summarize the previously known examples of positive triples.  In Section 3, we prove Theorem~\ref{T:E} and Corollary~\ref{zk}.  In Sections 4 and 5, we classify all positive triples, proving essentially that the examples of Theorem~\ref{T:E} are the only positive triples beyond the previously known examples.

\section{Previously known examples of positive triples}
In this section, we summarize the previously known examples of positive triples from~\cite{Tapp}, and the resultant examples of quasi-positively curved manifolds, several of which were first proven in~\cite{Wilking} to have the stronger property of almost-positive curvature.

First, observe that whenever $G/H$ has positive curvature with respect to a normal homogeneous metric, and $K$ is any intermediate subgroup between $H$ and $G$ (with $\text{dim}(H)<\text{dim}(K)<\text{dim}(G)$) the triple $H\subset K\subset G$ is easily seen to be a positive triple.

To describe further examples, we let $\KK\in\{\R,\C,\HH\}$ and let $G(n)$ denote $O(n)$, $U(n)$ or $Sp(n)$ respectively.  The unit tangent bundle of the corresponding projective space, $T^1\KP^n$, comes from the triple
\begin{equation}\label{T1}
\{\text{diag}(z,z,A)\mid z\in G(1), A\in G(n-1)\}\subset G(1)\times G(n)\subset G(n+1).
\end{equation}
This is a positive triple, and therefore $T^1\KP^n$ admits quasi-positive curvature.  Similarly, the unit tangent bundle of the Cayley Plane, $T^1\mathbb{OP}^2$, admits quasi-positive curvature because the following is a positive triple:
\begin{equation}\label{T2}
Spin^+(7)\subset Spin(9)\subset F_4.
\end{equation}
The projective tangent bundle of $\KP^n$, denoted $P_{\KK}T\KP^n$, comes from the triple
\begin{equation}\label{T3}
\{\text{diag}(z_1,z_2,A)\mid z_i\in G(1), A\in G(n-1)\}\subset G(1)\times G(n)\subset G(n+1).
\end{equation}
Since this triple is obtained from Triple~\ref{T1} by enlarging $H$, which
preserves the positivity property, $P_{\KK}T\KP^n$ admits quasi-positive curvature.  Moreover, $P_{\KK}T\KP^n$ is known to have almost-positive curvature.
The projective tangent bundle of the Cayley Plane, $P_{\mathbb{O}}T\mathbb{OP}^2$, corresponds to
\begin{equation}\label{T2P}
Spin(8)\subset Spin(9)\subset F_4.
\end{equation}
This triple is positive because it is obtained from Triple~\ref{T2} by enlarging $H$. In fact, $P_{\mathbb{O}}T\mathbb{OP}^2$ is one of three flag manifolds proven by Wallach in~\cite{Wallach} to admit a homogeneous metric of positive curvature.  It is sometimes denoted $W^{24}$.  Wallach's other two flag manifolds, $W^6=P_\C T\CP^2$ and $W^{12}=P_\HH T\HP^2$, come from the $n=2$ cases of  Triple~\ref{T3} with $\KK\in\{\C,\HH\}$.

In the triple for $T^1\HP^n$ above (\ref{T1} with $\KK=\HH$), there is a different  embedding of $H$ in $K$ which also yields a positive triple:
\begin{equation}\label{T4}
\{\text{diag}(z,1,A)\mid z\in Sp(1), A\in Sp(n-1)\}\subset Sp(1)\times Sp(n)\subset Sp(n+1) \,\,\,\,\,\, (n\geq 2).
\end{equation}
We see that the resultant $S^{4n-1}$-bundle over $\HP^n$ has quasi-positive curvature.

In the triple for $T^1\CP^n$ above  (\ref{T1} with $\KK=\C$), there is an infinite family of alternative embeddings of $H$ in $K$ parameterized by $k,l\in\Z$:
\begin{equation}\label{T5}
\{\text{diag}(z^k,z^l,A)\mid z\in U(1), A\in U(n-1)\}\subset U(1)\times U(n)\subset U(n+1),\,\,\,\,\,\,(n\geq 2).
\end{equation}
This is a positive triple if $k$ and $l$ are not both zero.  The resultant lens space bundle over $\CP^n$ therefore has quasi-positive curvature; moreover, it has almost-positive cuvature when $k\cdot l<0$.  Notice that when $n=2$, the family of total spaces corresponding to these triples includes the Aloff-Wallach spaces that were proven in~\cite{AW} to admit homogeneous metrics with positive curvature.

In Triple~\ref{T5}, notice that $SU(n+1)$ acts transitively on $G/H$ because every coset intersects SU(n+1).  If we intersect the three groups with SU(n+1) we obtain
\begin{equation}\label{T5p}
\{\text{diag}(z^k,z^l,A)\mid z\in U(1), A\in U(n-1)\}\cap SU(n+1)\subset S(U(1)\times U(n))\subset SU(n+1),
\end{equation}
which is also a positive triple when $k$ and $l$ are not both zero.  Triples~\ref{T5} and~\ref{T5p} have diffeomorphic total spaces.

Similarly, the three groups in the triple for $P_{\C}T\CP^n$ (\ref{T3} with $\KK=\C$) can be intersected with $SU(n+1)$ to obtain the following alternative positive triple for $P_{\C}T\CP^n$:
\begin{equation}\label{green}
S(U(1)\times U(1)\times U(n-1))\subset S(U(1)\times U(n))\subset SU(n+1).
\end{equation}

This completes our summary of the previously known examples of positive triples.
\section{New examples of positive triples}
In this section, we prove Theorem~\ref{T:E}.  Triple (4) from this theorem is a positive triple simply because it is obtained from Triple $\ref{T4}$ by enlarging $H$, which obviously preserves the positivity property.  Similarly, Triple (3) is obtained from Triple (2) by enlarging $H$. It remains only to verify that Triples (1), (2) and (5) are positive triples.

\begin{definition} Let $\mh\subset\mk\subset\mg$ be a triple of compact Lie algebras.  A vector in $\mp=\mg\ominus\mk$ is called \emph{fat} if it does not commute with any nonzero vectors in $\mm=\mk\ominus\mh$.  It is called \emph{strongly fat} if it does not commute with any vectors in $\mm\oplus\mp$ except for scalar multiples of itself.
\end{definition}

\begin{prop}\label{help}
Suppose $H\subset K\subset G$ are nested compact Lie groups with Lie algebras $\mh\subsetneq\mk\subsetneq\mg$.  Assume that the following conditions are satisfied:
\begin{enumerate}
\item[(i)]  No pair of linearly independent vectors in $\mp=\mg\ominus\mk$ commute.
\item[(ii)]  No pair of linearly independent vectors in $\mm=\mk\ominus\mh$ commute.
\item[(iii)]  The isotropy action of $K$ on $\mp$ acts transitively on the unit sphere in $\mp$.
\item[(iv)]  There exists a vector $A\in\mp$ which is fat.
\end{enumerate}
Then $H\subset K\subset G$ is a positive triple.
\end{prop}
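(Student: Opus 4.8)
The plan is to verify condition (iii) of Definition~\ref{D:postrip} with the fat vector $A\in\mp$ from hypothesis (iv); conditions (i) and (ii) of that definition are literally hypotheses (i) and (ii) of the proposition, so there is nothing to prove for them.

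The key preliminary step will be to turn the transitivity hypothesis (iii) into a concrete statement about brackets with a fixed vector of $\mp$. For any nonzero $W\in\mp$, the isotropy orbit $\{\Ad(k)W\mid k\in K\}$ is, by (iii), the full sphere of radius $|W|$ in $\mp$, so its tangent space at $W$---namely $[\mk,W]$---equals $W^{\perp}\cap\mp$. Feeding this into the skew-symmetry identity $\lb[U,W],X\rb=-\lb U,[X,W]\rb$ of the bi-invariant metric (valid for $X\in\mk$) yields the dictionary I will use repeatedly: for every $U\in\mp$,
\[
[U,W]^{\mk}=0\quad\Longleftrightarrow\quad U\perp[\mk,W]=W^{\perp}\cap\mp\quad\Longleftrightarrow\quad U\in\R W .
\]
In other words, the $\mk$-component of $[U,W]$ vanishes precisely when $U$ is a scalar multiple of $W$.

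Now fix linearly independent $Z\in\mm\oplus\mp$ and $W\in\mp$ with $[Z,W]=0$, and write $Z=Z_m+Z_p$, $Z_m=Z^{\mm}\in\mm$, $Z_p=Z^{\mp}\in\mp$. Since $[Z_m,W]\in[\mk,\mp]\subseteq\mp$, the $\mk$-component of $[Z,W]=0$ reduces to $[Z_p,W]^{\mk}=0$; the dictionary then forces $Z_p\in\R W$, so $[Z_p,W]=0$ and hence $[Z_m,W]=[Z,W]=0$, while $Z_m\ne0$ (otherwise $Z=Z_p$ would be a multiple of $W$). Using that $\ad_{Z_m}$ preserves the splitting $\mg=\mk\oplus\mp$ (because $Z_m\in\mk$) together with $[Z_m,W]=0$, I would then compute with the Jacobi identity that
\[
[Z_m,[A,W]^{\mk}]=\bigl([Z_m,[A,W]]\bigr)^{\mk}=\bigl([[Z_m,A],W]+[A,[Z_m,W]]\bigr)^{\mk}=[[Z_m,A],W]^{\mk}.
\]
Finally put $U:=[Z_m,A]\in[\mm,\mp]\subseteq\mp$. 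Then $U\ne0$, since $A$ is fat and $Z_m\in\mm\setminus\{0\}$; and $U$ is not a multiple of $W$, because $U=\lambda W$ would give $\ad_{Z_m}^2A=[Z_m,U]=\lambda[Z_m,W]=0$, hence $|U|^2=-\lb\ad_{Z_m}^2A,A\rb=0$, a contradiction. So the dictionary applies once more to give $[U,W]^{\mk}\ne0$, i.e.\ $[Z^{\mm},[A,W]^{\mk}]\ne0$; as $(Z,W)$ was arbitrary, this is condition (iii).

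I expect the main obstacle to be that very first move---recognizing that the abstract hypothesis ``$K$ acts transitively on the unit sphere of $\mp$'' is equivalent to the workable identity $[\mk,W]=W^{\perp}\cap\mp$, and that this identity is exactly what both (a) forces $Z^{\mp}$ to be parallel to $W$ and (b) certifies that a bracket of two vectors in $\mp$ retains a nonzero $\mk$-component. It is worth stressing that $G/K$ need not be symmetric here (e.g.\ $G_2/SU(3)=S^6$, from Triple (2)), so one may not simply assume $[\mp,\mp]\subseteq\mk$; the transitivity hypothesis is the substitute that closes the argument. Everything after the dictionary is a single use of the Jacobi identity plus the skew-symmetry of $\ad_{Z_m}$, and hypotheses (i) and (ii) are not used beyond their appearance in the definition.
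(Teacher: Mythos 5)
Your proof is correct and follows essentially the same route as the paper's: both convert the transitivity hypothesis into the statement that $[U,W]^\mk=0$ for $U,W\in\mp$ forces $U\in\R W$, use this to get $Z^\mp\parallel W$, $Z^\mm\neq 0$, $[Z^\mm,W]=0$, and then combine the Jacobi identity with fatness of $A$ and skew-symmetry of $\ad_{Z^\mm}$ to conclude $[Z^\mm,[A,W]^\mk]\neq 0$. The only cosmetic difference is that you check directly that $[Z^\mm,A]$ is not parallel to $W$, whereas the paper first records $[X,[X,A]]\neq 0$ for all nonzero $X\in\mm$ and reaches the same contradiction from $W\parallel[X,A]$.
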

This proposition was proven in~\cite{Tapp} under the added hypothesis that $(G,K)$ is a symmetric pair.
\begin{proof}
Let $A\in\mp$ be a fat vector.  For all nonzero $X\in\mm$, we know $[X,A]\neq 0$, so:
$$0\neq g_0([X,A],[X,A]) = g_0([X,[X,A]],A).$$
In particular, $[X,[X,A]] \neq 0$  for all nonzero $X\in\mm$.

Now let $Z\in\mm\oplus\mp$ and $W\in\mp$ be linearly independent vectors which commute, so that
$$0 = [Z,W] = [Z^\mm,W] + [Z^\mp,W].$$
Since $[Z^\mm,W]\in\mp$, we see that $0=[Z,W]^\mk = [Z^\mp,W]^\mk$.
In particular, for all $Y\in\mk$, we have
$$0 = \lb Y,[Z^\mp,W]\rb = \lb [Y,Z^\mp],W\rb.$$
By hypothesis, the isotropy representation is transitive; this implies that $Z^\mp$ is parallel to $W$.  Thus,
$X:=Z^\mm\neq 0$ and $[X,W] = 0$.

It will suffice to show that the following is nonzero:
$$[Z^\mm,[A,W]^\mk] = [X,[A,W]-[A,W]^\mp] =-[W,[X,A]] - [X,[A,W]^\mp] \text{ (Jacobi identity}).$$
To show that this is nonzero, it suffices to show that its $\mk$-component is nonzero, equivalently, to show that
$[W,[X,A]]^\mk$ is nonzero.  But, since the isotropy representation is transitive, if $[W,[X,A]]^\mk=0$, then $W$ would be parallel to $[X,A]$ (by the above argument), which would mean that $[X,[X,A]]=[X,W]=0$, which is a contradiction.
\end{proof}
It remains to establish the positivity of Triples (1), (2), and (5) from Theorem~\ref{T:Main}.  By Proposition~\ref{help}, it will suffice in each case to prove that a fat vector exists.
\begin{prop} $SO(3)_{max}\times Sp(1)\subset Sp(2)\times Sp(1)\subset Sp(3)$ is a positive triple.
\end{prop}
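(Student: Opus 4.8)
The plan is to invoke Proposition~\ref{help}, so the entire task reduces to producing a single fat vector $A \in \mp = \mg \ominus \mk$. First I would record the ambient data: $\mg = \mathfrak{sp}(3)$, $\mk = \mathfrak{sp}(2) \oplus \mathfrak{sp}(1)$ embedded block-diagonally (a $2\times 2$ quaternionic block and a $1\times 1$ quaternionic block), and $\mh = \mathfrak{so}(3)_{max} \oplus \mathfrak{sp}(1)$, where $\mathfrak{so}(3)_{max} \subset \mathfrak{sp}(2)$ is the irreducible embedding coming from the $4$-dimensional real representation of $SU(2)$ (equivalently, the isotropy group of the Berger space $B^7 = Sp(2)/SO(3)_{max}$). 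Here $\mp$ consists of the off-diagonal quaternionic entries coupling the $2\times 2$ block to the last coordinate, so $\dim_\R \mp = 8$, and $\mm = \mathfrak{sp}(2) \ominus \mathfrak{so}(3)_{max}$ has $\dim_\R \mm = 10 - 3 = 7$; note $\mm$ is exactly the tangent space of $B^7$ at the identity coset.

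Next I would verify the hypotheses of Proposition~\ref{help} other than (iv). Condition (i): no two linearly independent vectors in $\mp$ commute — this is a standard fact for the symmetric pair $(Sp(n+1), Sp(1)\times Sp(n))$ applied with $n=2$ (the $\mp$ here is the same as for $T^1\HP^2$), and can be checked directly since a pair of commuting off-diagonal vectors forces proportionality of the quaternionic entries. Condition (ii): no two linearly independent vectors in $\mm$ commute — this is precisely the statement that the Berger space $B^7 = Sp(2)/SO(3)_{max}$ has positive sectional curvature in its normal homogeneous metric, which is Berger's classical result~\cite{Berger}. Condition (iii): the isotropy action of $K = Sp(2)\times Sp(1)$ on $\mp \cong \HH^2$ is transitive on the unit sphere $S^7$ — indeed $Sp(2)$ acts on $\HH^2$ as the standard representation and $Sp(1)$ acts by scalar multiplication on the right, and $Sp(2)$ alone is already transitive on $S^7 \subset \HH^2$.

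The heart of the matter is condition (iv): exhibiting a fat vector $A \in \mp$, i.e., a vector such that $[A, X] \neq 0$ for every nonzero $X \in \mm$. Writing a general element of $\mp$ as a column $w = (w_1, w_2)^{\mathsf T} \in \HH^2$ (sitting in the appropriate off-diagonal slot of a $3\times3$ quaternionic matrix), and $X \in \mathfrak{sp}(2)$ acting on $\HH^2$, the bracket $[A,X]$ vanishing is equivalent to $X \cdot w = w \cdot q$ for some $q \in \mathfrak{sp}(1) = \operatorname{Im}\HH$ (coming from the $Sp(1)$-part), roughly speaking; so fatness of $A = w$ means: the only $X \in \mm \subset \mathfrak{sp}(2)$ for which $X\cdot w$ lies in the line $w \cdot \operatorname{Im}\HH$ is $X = 0$. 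I would pick a concrete candidate — the natural first try is $A = e_1 = (1,0)^{\mathsf T}$, or a generic unit vector — and then, using an explicit parametrization of $\mathfrak{so}(3)_{max} \subset \mathfrak{sp}(2)$ (via the $4 = 1 + 3$ or $\operatorname{Sym}^3$ decomposition of the spin representation) to describe $\mm$ as its orthogonal complement, compute the linear map $X \mapsto (X\cdot w) \bmod w\cdot\operatorname{Im}\HH$ on $\mm$ and check it is injective. The main obstacle I anticipate is bookkeeping: the irreducible $SO(3)_{max}$ embedding is the least transparent ingredient, so the real work is setting up good coordinates for $\mathfrak{so}(3)_{max}$ and hence for $\mm$, after which the fatness check becomes a finite-dimensional rank computation (possibly aided by the residual symmetry: the centralizer of $A$ inside $Sp(2)\times Sp(1)$ acts on the problem and can be used to reduce the number of cases). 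If $A=e_1$ fails to be fat, a generic choice will work, and one argues by a dimension/genericity count that the "bad" set of $w$ is a proper subvariety of $S^7$.
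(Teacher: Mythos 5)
Your overall strategy coincides with the paper's: check hypotheses (i)--(iii) of Proposition~\ref{help} (which you do correctly, via positive curvature of $\HP^2$ and of $B^7=Sp(2)/SO(3)_{max}$ and transitivity of $Sp(2)$ on $S^7\subset\HH^2$) and then reduce everything to exhibiting a fat vector, with $A=(1,0)\in\HH^2\cong\mp$ as the natural candidate. However, the step where you translate fatness into a concrete linear-algebra condition is set up incorrectly, and the decisive verification is never carried out. Since $\mh$ contains the \emph{entire} $sp(1)$-factor of $\mk$, every $X\in\mm=\mk\ominus\mh$ lies in the $sp(2)$-factor and has no $sp(1)$-component; for $X=\text{diag}(B,q)\in\mk$ and $A\cong w$ one has $[X,A]\leftrightarrow Bw-wq$, so on $\mm$ (where $q=0$) fatness of $w$ means simply that no nonzero $B\in\mm$ satisfies $B\cdot w=0$. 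Your criterion --- injectivity of $X\mapsto X\cdot w \bmod w\cdot\text{Im}\,\HH$ on $\mm$ --- describes the kernel of $\ad_A$ on all of $\mk$, where a compensating right multiplication by $q\in sp(1)$ is available, not on $\mm$; and as a test it can never succeed, because it asks a linear map from the $7$-dimensional space $\mm$ into the $5$-dimensional space $\HH^2/(w\cdot\text{Im}\,\HH)$ to be injective. Followed literally, your plan would report that every candidate $w$ fails.

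The fallback ``if $A=e_1$ fails, a generic $w$ works'' is also not a proof: the set of fat vectors is open but may be empty, and the existence of a fat vector is exactly the issue at stake --- it genuinely fails for other triples in this classification (for instance the paper shows $so(2)\subset so(3)_{max}\subset so(5)$ admits no suitable vector, and similarly for the $B^7$-base case with fiber $\Delta so(3)$), so no dimension/genericity count can replace a computation tied to the specific embedding $so(3)_{max}\subset sp(2)$. The missing ingredient is precisely the paper's short calculation with Wilking's explicit basis of $so(3)_{max}$ from~\cite{WP}: for $A=(1,0)$ the kernel of $\ad_A$ on the $sp(2)$-factor is $\mf=\{\text{diag}(0,q)\mid q\in sp(1)\}$, and since the lower-right entries of the three basis vectors of $so(3)_{max}$ span $\text{Im}\,\HH$, no nonzero element of $\mf$ is orthogonal to $so(3)_{max}$; hence $\mf\cap\mm=\{0\}$ and $A$ is fat (in particular your first candidate does work, once the criterion is stated correctly). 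Your coordinate setup for $so(3)_{max}$ would let you reproduce this, but as written the condition you propose to check is the wrong one and the key computation is absent.
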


\begin{proof} The corresponding Lie algebra triple is $so(3)_{max}\oplus sp(1)\subset sp(2)\oplus sp(1)\subset sp(3)$.
The following basis for the maximal $so(3)$ in $sp(2)$ is found in~\cite{WP}:
$$so(3)_{max} = \text{span}_\R\left\{
  \left(\begin{matrix} \frac 32 i & 0  \\ 0 & \frac 12 i\end{matrix}\right),
  \left(\begin{matrix} 0  & \frac 12 \sqrt{3}  \\ -\frac 12 \sqrt{3} & j\end{matrix}\right),
  \left(\begin{matrix} 0  & \frac 12 \sqrt{3} i  \\ \frac 12 \sqrt{3} i & k\end{matrix}\right)
       \right\}.$$
We claim that the vector $A=(1,0)\in\HH^2\cong\mp$ is fat.  Let $\mk'$ denote the $sp(2)$-factor of $\mk=sp(2)\oplus sp(1)$.  Let $\mf\subset\mk'$ denote the kernel of $\ad_A:\mk'\ra\mp$.  Since the isotropy action of the $Sp(2)$-factor of $K$ on $\HH^2\cong\mp$ is the standard action, we have
$$\mf = \{\text{diag}(0,q)\mid q\in sp(1)\}.$$
No nonzero vector in $\mf$ is orthogonal to all three vectors of the above basis for $so(3)_{\max}$.  Therefore, $\mf\cap\mm=\{0\}$, so $A$ is fat.
\end{proof}

\begin{prop} Both $SU(2)\subset SU(3)\subset G_2$ and $SU(3)\subset G_2\subset Spin(7)$ are positive triples.
\end{prop}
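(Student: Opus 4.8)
The plan is to apply Proposition~\ref{help} to each of the two triples, so that the work reduces to checking conditions (i)--(iv). Conditions (i)--(iii) come from standard facts about the spheres involved, so the one substantive point is (iv), the existence of a fat vector $A\in\mp$. First I would record the relevant identifications. For $SU(3)\subset G_2\subset Spin(7)$ we have $\mh=su(3)$, $\mk=g_2$, $\mg=spin(7)$, so $\mp=spin(7)\ominus g_2$ is the isotropy representation of $G_2$ on $T(Spin(7)/G_2)=TS^7\cong\R^7$, the standard $7$-dimensional representation, while $\mm=g_2\ominus su(3)$ is the isotropy representation of $SU(3)$ on $T(G_2/SU(3))=TS^6\cong\C^3$. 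For $SU(2)\subset SU(3)\subset G_2$ we have $\mh=su(2)$, $\mk=su(3)$, $\mg=g_2$, so $\mp=g_2\ominus su(3)\cong\C^3$ is the standard representation of $SU(3)$, while $\mm=su(3)\ominus su(2)$ is the isotropy representation on $T(SU(3)/SU(2))=TS^5$. In each of these four instances the representation carries (up to scale) a unique invariant inner product, and the corresponding sphere is round as a normal homogeneous space, hence positively curved; this gives conditions (i) and (ii). Condition (iii) also holds, since $G_2$ acts transitively on $S^6\subset\R^7$ and $SU(3)$ acts transitively on $S^5\subset\C^3$.

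It then remains to exhibit a fat $A\in\mp$ in each case, and here the key observation is that, because $\mp$ is identified with the isotropy representation of $K$, for $A\in\mp$ the map $\ad_A\colon\mk\to\mp$ is (up to sign) the infinitesimal action $X\mapsto X\cdot A$; hence its kernel $\mf$ is exactly the isotropy subalgebra $\{X\in\mk : [X,A]=0\}$ of the vector $A$. A vector $A\in\mp$ fails to be fat precisely when some nonzero $X\in\mm$ commutes with $A$, i.e.\ precisely when $\mf\cap\mm\neq\{0\}$. Now in both of our triples the subgroup $H$ is chosen exactly so that $G/H$ is the unit tangent bundle $T^1S^7$ (resp.\ $T^1S^6$); concretely this means that $\mh$ is the stabilizer in $\mk$ of a unit vector $A_0\in\mp$ --- for $SU(3)\subset G_2\subset Spin(7)$, $\mh=su(3)$ stabilizes a unit vector of $\R^7\cong\mp$, and for $SU(2)\subset SU(3)\subset G_2$, $\mh=su(2)$ stabilizes a unit vector of $\C^3\cong\mp$. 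Taking $A:=A_0$ one gets $\mf=\{X\in\mk:[X,A_0]=0\}=\mh$, so that $\mf\cap\mm=\mh\cap(\mk\ominus\mh)=\{0\}$, i.e.\ $A$ is fat, and Proposition~\ref{help} applies.

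The one place that needs care --- and the step I expect to be the main obstacle --- is the identification of $\mh$ as the stabilizer of a vector in $\mp$. This rests on structural facts about the exceptional groups: that $G_2$ contains essentially one copy of $SU(3)$ (the maximal connected subgroup, realized via the octonions as the stabilizer of a unit imaginary octonion, with $G_2/SU(3)=S^6$), and that the copy of $SU(2)\subset SU(3)$ occurring in Theorem~\ref{T:E} is the block‑diagonal one stabilizing a coordinate line of $\C^3$ (forced by the stated identification $M=T^1S^6$). Granting these uniqueness‑up‑to‑conjugacy statements the argument above is complete; alternatively, one could dispense with them by fixing an explicit octonion model for $G_2\subset Spin(7)$ and a concrete basis of $g_2$, choosing $A$ to be a coordinate vector, and computing $\mf$ and $\mf\cap\mm$ directly as in the proof of the preceding proposition.
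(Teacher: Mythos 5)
Your proposal is correct and follows essentially the same route as the paper: both reduce to Proposition~\ref{help} and produce the fat vector $A\in\mp$ by observing that $\mh$ is exactly the stabilizer of a suitable unit vector in $\mp$ (i.e.\ the kernel of $\ad_A:\mk\ra\mp$ equals $\mh$, so it meets $\mm$ trivially), the paper phrasing this via the transitive action of $G$ on $T^1S^{\ell}$ while you phrase it via the octonion/block-diagonal descriptions of $SU(3)\subset G_2$ and $SU(2)\subset SU(3)$ together with conjugacy uniqueness. One minor inaccuracy that does not affect the argument: for the fiber pair $su(2)\subset su(3)$ the isotropy representation on $\mm\cong\C^2\oplus\R$ is not irreducible and the normal homogeneous metric on $S^5=SU(3)/SU(2)$ is a Berger-type metric rather than the round one, but condition (ii) still holds, since this pair is row (2) of Table~\ref{positive_pairs} (equivalently, a direct computation shows no two linearly independent vectors in $\mm$ commute).
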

\begin{proof} In each case, we have a triple $H\subset K\subset G$ for which the base space $B=G/K$ is a sphere $S^{\ell}$ with $\ell\in\{6,7\}$. The natural transitive action of $G$ on $S^{\ell}$ induces a transitive action of $G$ on $T^1S^{\ell}$.  Choose $p\in S^{\ell}$ to be a point whose isotropy group is $K$.   We know we can find a vector
$A\in T_p S^{\ell}\cong\mp$ whose isotropy group is exactly $H$.  In other words, the kernel of
$\text{ad}_A:\mk\ra\mp$ equals $\mh$. This guarantees that the vector $A$ in $\mp$ is a fat vector.
\end{proof}

\begin{proof}[Proof of Corollary~\ref{zk}]
If $H\subset K\subset G$ is a positive triple, it was proven in~\cite{Tapp} that $M=G/H$ admits a metric of quasi-positive curvature.  By ~\cite[Remark~2.1]{Tapp}, the group $K$ acts isometrically on $M$ with respect to this metric.  A subgroup $L\subset K$ acts freely if and only if $L\cap(g^{-1}\cdot H\cdot g)=\{I\}$ for all $g\in G$.

For the positive triple $SU(2)\subset SU(3)\subset G_2$, let $L$ be the center of $U(2)$ embedded in $SU(3)$; that is, $L=\{\text{diag}(e^{-2it},e^{it},e^{it})\mid t\in[0,2\pi)\}\subset SU(3)$.  Notice that $L$ acts \emph{almost} freely on $M=G_2/SU(2)$ with ineffective kernel equal to the cyclic group $\Z_2=\{\text{diag}(e^{-2it},e^{it},e^{it})\mid t\in\{0,\pi\}\}\subset L$. This is because $L$ commutes with $H$, and $L\cap H=\Z_2$. Thus, our quasi-positively curved metric on $T^1S^6$ is invariant under the free action of $L/\Z_2\cong S^1$, and therefore also under the subaction of the finite cyclic group $\Z_k\subset S^1$ for each positive integer $k$.
\end{proof}
\section{Preliminary Results}

In the section, we gather some definitions and results that will be used in the next section for stating and proving our main theorem, which classifies all positive triples.

First, we make precise the notion of equivalence we use in our classification, namely, triple-isomorphism, defined here.

\begin{definition} A nested pair of Lie algebras $\mk\subset\mg$ is called \emph{pair-isomorphic} to another nested pair $\mk'\subset\mg'$ if there is a Lie algebra isomorphism $\phi:\mg\ra\mg'$ such that $\phi(\mk)=\mk'$.  Similarly, a nested triple $\mh\subset\mk\subset\mg$ is called \emph{triple-isomorphic} to another nested triple $\mh'\subset \mk'\subset\mg'$ if there is a Lie algebra isomorphism $\phi:\mg\ra\mg'$ such that $\phi(\mk)=\mk'$ and $\phi(\mh)=\mh'$.
\end{definition}

Notice that if $\mh\subset\mk\subset\mg$ is a positive triple, then so is the triple $\mh\oplus\ma\subset\mk\oplus\ma\subset\mg\oplus\ma$
for any compact Lie algebra $\ma$, but this modification does not change the example topologically or metrically.  Thus, we need only classify the triple-isomorphism classes of \emph{reduced} positive triples, defined as follows:
\begin{definition}
A pair $\mk\subset\mg$ is called \emph{reduced} if the two Lie algebras do not share a common nontrivial idea.  Similarly, a triple $\mh\subset\mk\subset\mg$ is called \emph{reduced} if the three Lie algebras do not share a common nontrivial ideal.
\end{definition}

Next,  in Table~\ref{positive_pairs} we review the list of pairs $\mk\subset\mg$ for which
$G/K$ has positive curvature.  This will serve as our list of candidates for the base pair and also for the fiber pair of a positive triple.  For each pair, we include all intermediate subalgebras between $\mk$ and $\mg$, which is helpful in clarifying the embedding of $\mk$ in $\mg$.  We use the notation ``$\Delta \mh$'' to denote a diagonal embedding of $\mh$ in $\mh\oplus\mh$.

\begin{table}[h!]
\caption{Positive curvature pairs $G/K$}\label{positive_pairs}
\begin{tabular}{ r | c |  l }\hline
   ~ & $G/K$ & $\mk\subset \{\text{any intermediate subalgebras}\}\subset\mg$  \\
   \hline
   (1)  & $S^{n}$ & $so(n)\subset so(n+1)$ \\
   (2) & $S^{2n+1}$ & $su(n)\subset u(n)\subset su(n+1)$ \\
   (3) & $S^{4n+3}$ & $sp(n)\subset u(1)\oplus sp(n)\subset sp(1)\oplus sp(n)\subset sp(n+1)$ \\
   (4) & $S^{15}$ & $spin^+(7)\subset spin(8)\subset spin(9)$ \\
   (5) & $S^7$ & $\mg_2\subset spin(7)$ \\
   (6) & $S^6$ & $su(3)\subset \mg_2$ \\
   (7) & $\CP^n$ & $u(n)\subset su(n+1)$ \\
   (8) & $\CP^{2n+1}$ & $sp(n)\oplus u(1)\subset sp(n)\oplus sp(1)\subset sp(n+1)$ \\
   (9) & $\HP^n$ & $sp(n)\oplus sp(1)\subset sp(n+1)$ \\
   (10) & $\mathbb{OP}^2$ & $spin(9)\subset f_4$ \\
   (11) & $B^7$ & $so(3)_{\text{max}}\subset so(5)$ \\
   (12) & $B^{13}$ & $sp(2)\oplus u(1)\subset u(4)\subset su(5)$ \\
   (13) & $W_{1,1}$ & $\Delta u(2)\subset u(2)\oplus so(3)\subset su(3)\oplus so(3)$ \\
   (14) & $S^{2n+1}$ & $u(n)\subset u(1)\oplus u(n)\subset u(n+1)$ \\
   (15) & $S^{4n+3}$ & $sp(n)\oplus \Delta u(1)\subset sp(n)\oplus u(1)\oplus u(1)$ \\
        &  &  \hspace{1in}$\subset sp(n)\oplus sp(1)\oplus u(1)\subset sp(n+1)\oplus u(1)$ \\
   (16) & $S^{4n+3}$ & $sp(n)\oplus \Delta sp(1)\subset sp(n)\oplus sp(1)\oplus sp(1)\subset sp(n+1)\oplus sp(1)$ \\
\hline
\end{tabular} \end{table}

\begin{prop}\label{list} Any pair of compact Lie algebras $\mk\subsetneq\mg$
with the property that no two linearly independent vectors in $\mp=\mg\ominus\mk$ commute is pair-isomorphic to a pair of the form $\mk'\oplus\ma\subset\mg'\oplus\ma$, where $\mk'\subset\mg'$ is one of the 16 pairs from Table~\ref{positive_pairs}, and $\ma$ is a compact Lie algebra.
\end{prop}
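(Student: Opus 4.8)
The plan is to recognize the stated hypothesis as the condition that the normal homogeneous metric on $G/K$ has strictly positive curvature, and then to appeal to the classical classification of such metrics. As a first step I would pass to a reduced pair. Decomposing $\mg$ into its center $\mz(\mg)$ and its simple ideals, let $\ma\subseteq\mk$ be the sum of $\mz(\mg)\cap\mk$ together with all simple ideals of $\mg$ contained in $\mk$; this is the largest ideal of $\mg$ lying inside $\mk$, and $\mg=\mg'\oplus\ma$ is an orthogonal direct sum of ideals, where $\mg'$ consists of the remaining simple ideals together with the orthogonal complement of $\mz(\mg)\cap\mk$ in $\mz(\mg)$. Setting $\mk'=\mk\cap\mg'$, one checks that $\mk=\mk'\oplus\ma$, and hence $\mp=\mg\ominus\mk=\mg'\ominus\mk'$, so the non-commuting hypothesis holds verbatim for the pair $\mk'\subsetneq\mg'$; moreover this pair is reduced, since any nontrivial common ideal of $\mk'$ and $\mg'$ would be an ideal of $\mg$ contained in $\mk$, hence contained in $\ma\cap\mg'=\{0\}$. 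Thus it suffices to show that a reduced pair $\mk'\subsetneq\mg'$ with the non-commuting property is pair-isomorphic to one of the sixteen entries of Table~\ref{positive_pairs}.

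For the normal homogeneous metric on $G'/K'$ induced by a bi-invariant metric on $G'$, O'Neill's formula for the Riemannian submersion $G'\to G'/K'$ gives $\sec(X\wedge Y)=\tfrac14\,|[X,Y]^{\mp}|^2+|[X,Y]^{\mk'}|^2$ for an orthonormal pair $X,Y\in\mp$, so this metric is positively curved exactly when no two linearly independent vectors of $\mp$ commute. (If $\dim\mp\le 1$ the hypothesis is vacuous and reducedness forces $\mk'=\{0\}$ and $\mg'=u(1)$, which is entry (1) with $n=1$; so assume $\dim\mp\ge 2$.) Let $G'$ be the simply connected group with Lie algebra $\mg'$ and $K'\subset G'$ the connected subgroup with Lie algebra $\mk'$; the latter is automatically closed, for otherwise $\overline{K'}$ would contain a torus meeting $\mz(\mg')$ and hence yield a commuting pair in $\mp$. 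Then $G'/K'$ is a simply connected, almost effective (because the pair is reduced) normal homogeneous manifold of positive curvature, so by the classification of such manifolds (Berger~\cite{Berger}, as corrected in later work) it must be one of the spaces listed in Table~\ref{positive_pairs}; recording the admissible group presentation of each such space produces exactly the sixteen reduced pairs in that table. Conversely, each of those sixteen pairs is reduced and classically carries a positively curved normal homogeneous metric, hence satisfies the hypothesis; this proves the proposition.

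The substantive ingredient is Berger's theorem itself, which is deep and which I would invoke rather than reprove; the reduction above and the converse direction are routine. The part requiring real care is the dictionary between Berger's list of manifolds and the table of pairs. A single homogeneous space occurs under several inequivalent embeddings---the odd spheres $S^{2n+1}$ and $S^{4n+3}$ each appear with two or three essentially different pairs, which is why they show up as entries (2),(14) and (3),(8),(15),(16)---degenerate values of the parameters must be accounted for (for instance $S^3=Sp(1)/\{1\}$ is recovered as entry (3) with $n=0$), and one must retain the non-semisimple presentations in which $\mg'$ contains a central circle not lying in $\mk'$, since these produce the Berger-sphere type metrics rather than the round ones. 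Verifying that this enumeration is exhaustive---that no further reduced pair with the non-commuting property escapes the list---is carried out by running through the isotropy representation in each case of Berger's list, and is the only genuinely delicate point.
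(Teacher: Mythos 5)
Your proposal is correct and follows essentially the same route as the paper: split off the largest ideal of $\mg$ contained in $\mk$ to reduce to a reduced pair, identify the no-commuting-vectors hypothesis with positive curvature of the normal homogeneous metric, and then invoke Berger's classification as corrected in~\cite{WP}, including the non-reduced sphere presentations such as entries (14)--(16). The paper gives no more detail than this citation, so your extra bookkeeping (the orthogonal splitting $\mg=\mg'\oplus\ma$ and the dictionary between Berger's spaces and the sixteen pairs) only fills in what the authors leave implicit.
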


This proposition follows from Berger's classification~\cite{Berger}, as corrected in~\cite{WP}. Notice that the pair $\mk\subset\mg$ is reduced if and only if $\ma=\{0\}$.

For all parameterized pairs in Table~\ref{positive_pairs}, the parameter $n$ can be taken from the range $n\geq 1$.  However, to avoid redundancy, it is better to exclude the $n=1$ case of pairs (7) and (9), each of which is pair-isomorphic to a different pair on the list.

One can find in~\cite{WP} descriptions of the embeddings $\mk$ in $\mg$ for the pairs (11), (12) and (13).  The embeddings for all other items on the list come from the well known classification of transitive effective actions on spheres \cite{Mo-Sa} and projective spaces \cite{Bor 1, Bor 2}.

In each pair $\mk\subset\mg$ from Table~\ref{positive_pairs}, the embedding of $\mk$ in $\mg$ is unique, up to conjugation, among embeddings within the same pair-isomorphism class, except for the following:
In case (1) when $n=7$, there are three non-conjugate embeddings of $so(7)\subset so(8)$, due to the Triality Principle.
In case (14) there is an infinite family of non-conjugate embeddings, parameterized by $k,l\in\Z$ (as in the fiber pair of Triple~\ref{T5p}).
In case (15) there is a one-parameter family of non-conjugate embeddings, given by the angle of the circle factor.  The fact that there are no others follows from Dynkin \cite[Theorem~15.1]{dynkin1} when $\mg$ is simple, and when $\mg$ is not simple via straightforward arguments that reduce to the simple case.

It is not quite true that every positive triple is one of the ones described in Sections 2 and Theorem~\ref{T:E}.  Rather, there are some additional ones which are obtained from these via the following construction:

\begin{lem}\label{L:Cheeger}
Suppose that $H\subset K\subset G$ is a positive triple.  Assume that there exists an intermediate subgroup, $J$,  between $H$ and $K$ (possibly equal to $K$) which is finitely covered by $H\times A$ for some nontrivial compact Lie group $A$.  Let $\rho:H\times A\ra J$ denote this $n$-fold covering homomorphism.  Assume that $\rho(H\times\{I\})=H$.  Define
$$B=\{(\rho(h,a^l),a^k)\mid h\in H, a\in A\}\subset J\times A\subset K\times A,$$
where $k,l$ are nonzero integers.  If $A$ is not isomorphic to $U(1)$, then assume that $n=k=l=1$.
Then the triple
$$B\subset K\times A\subset G\times A$$
is also positive.
The total space of this new triple, $(G\times A)/B$, is finitely covered by the total space of the old triple, $G/H$.  If $n=k=l=1$, then these total spaces are diffeomorphic.
\end{lem}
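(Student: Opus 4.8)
The plan is to verify directly that the new triple $B\subset K\times A\subset G\times A$ satisfies the three conditions of Definition~\ref{D:postrip}, exploiting the fact that the ``new'' complementary subspaces are built in a controlled way from the ``old'' ones. Write $\mb$, $\mk\oplus\ma$, $\mg\oplus\ma$ for the Lie algebras, and let $\mp=\mg\ominus\mk$ and $\mm=\mk\ominus\mh$ be the old complements. First I would compute the new complements. Since $\mathfrak{b}$ lies in $(\mathfrak{j}\oplus\ma)\subset\mk\oplus\ma$ and projects onto all of $\mathfrak{j}$ under the first factor, the new ``base'' complement $\mp_{\text{new}}=(\mg\oplus\ma)\ominus(\mk\oplus\ma)$ is just the old $\mp$ (sitting in the $\mg$-factor), so condition (i) for the new triple is \emph{identical} to condition (i) for the old triple and holds for free. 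The new ``fiber'' complement $\mm_{\text{new}}=(\mk\oplus\ma)\ominus\mb$ is the sum of the old $\mm$ together with a ``twisted diagonal'' copy of $\ma$ (when $A\cong U(1)$ this is a single line $\mathbb R\cdot(\xi,-\eta)$ where $\xi$ spans $\rho_*(\{0\}\oplus\ma)\subset\mathfrak j$ and $\eta$ spans $\ma$; when $n=k=l=1$ and $A$ is arbitrary, $\mathfrak b=\{(\rho_*(h,0),a):\ldots\}$ and $\mm_{\text{new}}$ is an isomorphic off-diagonal copy of $\ma\oplus(\text{old }\mm)$ pieces — the precise description needs care but is routine).

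The heart of the argument is condition (ii): no two linearly independent vectors of $\mm_{\text{new}}$ commute. I would take a general vector $X_{\text{new}}=(X+s\xi,-sk\eta)$ with $X\in\mm$, $s\in\R$ (in the $U(1)$ case), or the analogous expression in the general case, and compute the bracket of two such vectors inside $\mk\oplus\ma$. The $\ma$-component of any bracket vanishes automatically (the $U(1)$, resp. abelian-or-not, second factor — actually one must be slightly careful when $A$ is nonabelian, which is exactly why the lemma restricts to $n=k=l=1$ there and $\xi$ then lands in the part of $\mathfrak j$ orthogonal to $\mathfrak h$ only if $J\neq H$...). The key observation is that $\xi$, being (up to covering) the image of the $\ma$-summand, is \emph{central} in $\mathfrak j$ and in particular commutes with all of $\mm\cap\mathfrak j$ and with itself; so the bracket of $(X+s\xi,\cdot)$ and $(X'+s'\xi,\cdot)$ reduces to $[X,X']$ up to terms that either vanish by centrality of $\xi$ or land in directions one can control. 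If $[X_{\text{new}},X'_{\text{new}}]=0$ one then extracts $[X,X']=0$, hence by old condition (ii) that $X,X'$ are dependent, and then a short linear-algebra step forces $X_{\text{new}},X'_{\text{new}}$ dependent. I expect this is where the bookkeeping is densest and where the hypotheses ``$A\cong U(1)$ or $n=k=l=1$'' are genuinely used — without them $\mm_{\text{new}}$ can contain a non-abelian subalgebra of $\ma$ and condition (ii) fails.

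For condition (iii) I would simply reuse the vector $A_{\text{old}}\in\mg$ that worked for the old triple, viewed as $(A_{\text{old}},0)\in\mg\oplus\ma$, and check the bracket condition $[Z^{\mm_{\text{new}}},[(A_{\text{old}},0),W]^{\mk\oplus\ma}]\neq 0$ for linearly independent commuting $Z\in\mm_{\text{new}}\oplus\mp_{\text{new}}$, $W\in\mp_{\text{new}}=\mp$. Since $W\in\mp$ and $A_{\text{old}}$ only involves the $\mg$-factor, $[(A_{\text{old}},0),W]$ lands in $\mg$, its $(\mk\oplus\ma)$-projection is $[A_{\text{old}},W]^{\mk}$, and bracketing with $Z^{\mm_{\text{new}}}$: the $\xi$-part of $Z^{\mm_{\text{new}}}$ is central in $\mathfrak j\subset\mk$ and commutes with $[A_{\text{old}},W]^{\mathfrak j}$, while the old-$\mm$ part reproduces the old condition (iii) expression — so one shows the bracket equals the old nonzero quantity $[Z^{\mm},[A_{\text{old}},W]^{\mk}]$ plus possibly a term that does not interfere, after checking that $Z^{\mathfrak p}$ commuting with $W$ still forces $Z^{\mathfrak p}\parallel W$ (same argument as before). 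Finally the topological statement: $B$ is a subgroup of $K\times A$ of the same dimension as $H$ and the projection $G\times A\to G$ (quotienting the $A$-factor appropriately) induces a finite covering $(G\times A)/B\to G/H$ whose degree one reads off from the index $[\,(H\times A):\rho^{-1}(H)\cdot\ker\,]$ — explicitly $nkl$ or so — and equals $1$ precisely when $n=k=l=1$, in which case the covering is a diffeomorphism. The main obstacle, as noted, is the careful identification of $\mm_{\text{new}}$ and the verification of condition (ii); conditions (i) and (iii) and the topology are comparatively routine once the complements are pinned down.
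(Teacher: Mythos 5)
Your overall strategy is the right one in outline (the new base complement equals the old $\mp$, so condition (i) is free, and one wants to transfer (ii) and (iii)), but the mechanism you propose for (ii) and (iii) does not work as written, and you are missing the device that makes the transfer clean: the projection $\pi:\mg\oplus\ma\ra\mg$ onto the first ideal is a Lie algebra homomorphism which restricts to the identity on $\mp$ and carries $\mm_{\text{new}}=(\mk\oplus\ma)\ominus(\mh\oplus\Delta\ma)$ bijectively onto the old $\mm$. With this, everything follows at once: a commuting linearly independent pair in $\mm_{\text{new}}$ (resp.\ a commuting independent pair $Z\in\mm_{\text{new}}\oplus\mp$, $W\in\mp$) projects to a commuting independent pair for the old triple, and $[Z^{\mm_{\text{new}}},[A,W]^{\mk\oplus\ma}]$ projects onto the old quantity $[\pi(Z)^{\mm},[A,W]^{\mk}]$, nonzero by old (iii). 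Your route instead rests on the claim that $\xi=\rho_*(\ma)$ is central in $\mathfrak{j}$: this is false unless $\ma$ is abelian, and even for $A\cong U(1)$ it is beside the point, since the brackets you must control involve $[\xi,X']$ with $X'\in\mk\ominus\mathfrak{j}$, about which centrality in $\mathfrak{j}$ says nothing; hence ``extracting $[X,X']=0$'' does not follow (what old (ii) actually gives is that the full projections are parallel). Your diagnosis of the hypotheses is also off: at the Lie-algebra level positivity transfers for any slope and any $\ma$ (note $\xi(\ma)\subset\mm_{\text{old}}$, so old condition (ii) already forbids commuting independent pairs coming from $\ma$); the restriction ``$A\cong U(1)$ or $n=k=l=1$'' is needed so that $B$ is a well-defined subgroup and for the identification of the total space, not to rescue condition (ii). Finally, for (iii) the check you invoke (``$Z^{\mp}$ commuting with $W$ forces $Z^{\mp}\parallel W$'') belongs to the proof of Proposition~\ref{help}, where it uses transitivity of the isotropy action; it is neither available nor needed here—what must be verified is that the projected pair $(\pi(Z),W)$ is linearly independent and commuting, which follows from injectivity of $\pi$ on $\mm_{\text{new}}\oplus\mp$ and the homomorphism property.

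The topological claim in your sketch goes the wrong way. You assert a finite covering $(G\times A)/B\ra G/H$ induced by the projection $G\times A\ra G$; but that projection descends only to a map to $G/J$, and the lemma asserts the opposite direction: $G/H$ finitely covers $(G\times A)/B$. The paper proves this by writing down an explicit diffeomorphism $f([g,a])=[g\cdot\rho(I,a^{-l/k})]$ from $(G\times A)/B$ onto $M_k=M/\Z_k$, the quotient of $M=G/H$ by the almost-free $\Z_k\subset A$-action (and $f([g,a])=[g\cdot\rho(I,a^{-1})]$, a diffeomorphism onto $G/H$ itself, when $n=k=l=1$); since a quotient such as a lens space does not cover the sphere, your direction cannot be correct in general, and the degree is governed by $k$ and the ineffective kernel $\Z_k\cap\Z_n$, not by ``$nkl$.'' Once you replace the bracket-by-bracket bookkeeping with the projection homomorphism $\pi$ and construct the explicit map for the topology, the argument becomes essentially the paper's.
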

\begin{proof}
At the Lie algebra level, the new triple has the form $$\mh\oplus\Delta\ma\subset\mk\oplus\ma\subset\mg\oplus\ma.$$
Notice that $\mp\subset\mg$ is the same for the old and new triples.  Let $\mm_0=\mk\ominus\mh$ and $\mm_1=(\mk\oplus\ma)\ominus(\mh\oplus\Delta\ma)$ denote the
fiber components for the old and new triples respectively. Notice that the projection $\pi:\mg\oplus\ma\ra\mg$ sends $\mm_1$ bijectively onto $\mm_0$.  Further, if $X,Y\in\mm_1\oplus\mp$, then $\pi([X,Y])=[\pi(X),\pi(Y)]$.  From this, it is straightforward to see that any $A\in\mp$ that satisfies property ($iii$) of the definition of ``positive triple'' with respect to the old triple will also satisfy property ($iii$) with respect to the new triple.

Now suppose that $n=l=k=1$.  Using the fact that $\rho(\{I\}\times A)$ commutes with $\rho(H\times\{I\})$ inside $K$, it is straightforward to check that the function $f:(G\times A)/B\ra G/H$
defined as
$f([g,a]) = [g\cdot \rho(I, a^{-1})]$
is a well-defined diffeomorphism.

Finally, suppose that $A\cong U(1)$, and $k,l$ are arbitrary nonzero integers.  Notice that the natural action of $A$ on $M=G/H$ (defined as $a\star[g]=[\rho(I,a)\cdot g]$) is \emph{almost} free with ineffective kernel equal to the cyclic group $\Z_n\subset A$.  This is because $\rho(\{I\}\times A)$ and $\rho(H\times\{I\})$ commute inside $K$, and because $\{a\in A\mid \rho(I,a)\in\rho(H\times\{I\})\}=\Z_n$.  Consider the finite cyclic group $\Z_k\subset A$, and the quotient
$$M_k=M/\Z_k = G/\{\rho(h,\xi)\mid h\in H, \xi\in\Z_k\}.$$
Notice that $M_k$ is a smooth finite quotient of $M=G/H$ because $\Z_k$ acts almost freely on $M$ with ineffective kernel $\Z_k\cap\Z_n$.
Define the function $f:(G\times A)/B\ra M_k$ as
$$f([g,a]) = [g\cdot \rho(I, a^{-l/k})],$$
where $a^{-l/k}$ denotes $l^\text{th}$ power of the inverse of any $k^{\text{th}}$ root of $a$.  The result is independent of the choice of root.  It is straightforward to check that $f$ is a well-defined diffeomorphism.
\end{proof}

\begin{remark} We can apply the Lemma above to the following positive triples, with $A\cong U(1)$:
\begin{gather*}
\{I\}=SU(1)\subset U(1)\subset SU(2), \\
SU(n)\subset U(n)\subset SU(n+1),\\
Sp(n)\subset U(1)\oplus Sp(n)\subset Sp(n+1).
\end{gather*}
In each case, $G/H$ is a positively curved sphere found in Table~\ref{positive_pairs}, so the construction produces lens spaces.  On the other hand, applying the Lemma to the triple
$SU(2)\subset SU(3)\subset G_2$
provides an alternative proof of Corollary~\ref{zk}.
\end{remark}
\section{The classification of positive triples}
In this section, we prove the following classification theorem:
\begin{theorem}\label{T:Main}
Every reduced positive triple of Lie algebras is triple-isomorphic to one of the following:
\begin{enumerate}
\item A triple $\mh\subset \mk\subset \mg$ for which $G/H$ has positive curvature with respect to the normal homogeneous metric.
\item One of the previously known triples in Section 2.
\item One of the triples listed in Theorem~\ref{T:E}.
\item A triple derived from one of the possibilities above, by the method of Lemma~\ref{L:Cheeger}.
\end{enumerate}
\end{theorem}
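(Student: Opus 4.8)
The plan is to convert the classification into a finite search governed by Table~\ref{positive_pairs}, and then to settle the surviving candidates by explicit computations with isotropy representations. Conditions (i) and (ii) in the definition of a positive triple say precisely that the base pair $\mk\subsetneq\mg$ and the fiber pair $\mh\subsetneq\mk$ are both pairs in which no two linearly independent vectors of the orthogonal complement commute. Hence, by Proposition~\ref{list}, each of them is pair-isomorphic, after splitting off a compact summand, to one of the sixteen pairs of Table~\ref{positive_pairs}. Writing the base pair as $\mk''\oplus\ma\subset\mg''\oplus\ma$ and the fiber pair as $\mh'\oplus\mathfrak{b}\subset\mk'\oplus\mathfrak{b}$, this forces $\mk\cong\mk''\oplus\ma\cong\mk'\oplus\mathfrak{b}$, where $\mk''$ and $\mh'$ range over the ``small'' algebras of the table and $\mg''$, $\mk'$ over the ``large'' ones; in particular $\mg$ is one of the large algebras up to a compact summand, and $\mk$ must simultaneously be the large algebra of some fiber pair.

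So the first step is a purely combinatorial matching: for each of the sixteen base pairs I read off the isomorphism type of $\mk$ and retain only those fiber pairs whose large algebra is $\mk$ up to a compact summand. Dimension counts and comparison of simple factors eliminate the overwhelming majority of combinations at a glance, leaving a short explicit list. It will consist exactly of the triples catalogued in Section 2, the triples (1)--(5) of Theorem~\ref{T:E}, and a family of triples carrying diagonally embedded $u(1)$ or $sp(1)$ summands --- namely those whose base or fiber pair is one of rows (14)--(16) of Table~\ref{positive_pairs} --- which I recognize as outputs of the construction of Lemma~\ref{L:Cheeger} applied to the sphere triples of Section 2. Here one must also track the non-uniqueness of embeddings within a given pair-isomorphism class: the triality ambiguity for $so(7)\subset so(8)$, the two-integer family for row (14), and the circle-angle family for row (15); each variant is examined separately, and the residual freedom is absorbed into known families or into Lemma~\ref{L:Cheeger}.

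It then remains to test condition (iii) for each candidate. Proposition~\ref{help} applies whenever the isotropy action of $K$ on $\mp$ is transitive on the unit sphere of $\mp$, which holds for the base pairs (1), (4)--(7), (9), (10) of Table~\ref{positive_pairs} --- in particular for every base whose largest algebra is exceptional --- and there condition (iii) reduces to the existence of a fat vector in $\mp$, an explicit computation with the (well understood) isotropy representation, exactly as in the proofs of Theorem~\ref{T:E}(1), (2) and (5). For the remaining base pairs --- the spheres presented as $SU(n+1)/SU(n)$, $Sp(n+1)/Sp(n)$ or $U(n+1)/U(n)$, the pair for $\CP^{2n+1}$, the rows with extra $\Delta u(1)$ or $\Delta sp(1)$ factors, and the non-symmetric bases $B^7$, $B^{13}$ and $W_{1,1}$ --- Proposition~\ref{help} is unavailable, so I verify condition (iii) directly from its definition, in each instance either recognizing the triple as triple-isomorphic to one already on the list (typically a Lemma~\ref{L:Cheeger} derivative of a sphere triple) or showing that (iii) fails. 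Sorting the surviving positive triples into the four classes of the statement then finishes the proof.

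The main obstacle is concentrated in two places. First, the enumeration of conjugacy classes of embeddings within a fixed pair-isomorphism class: for the exceptional and spinor rows ($\mathfrak{g}_2\subset spin(7)$, $su(3)\subset\mathfrak{g}_2$, $spin^+(7)\subset spin(9)$, $spin(9)\subset f_4$) and for the maximal $SO(3)$ embeddings this requires genuine input from the representation theory of the groups, not just the elementary arguments that handle the classical families. Second, the fat-vector computations: for the exceptional bases one needs an effective model of the isotropy representation, and for the non-symmetric bases $B^7$, $B^{13}$, $W_{1,1}$ --- where Proposition~\ref{help} gives no leverage --- condition (iii) must be examined by hand. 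These non-symmetric bases, together with the embedding-uniqueness analysis, are where the argument is most delicate.
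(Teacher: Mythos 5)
Your overall strategy (reduce to a finite search via Proposition~\ref{list} and Table~\ref{positive_pairs}, then test condition (iii) on the survivors, using Proposition~\ref{help} when the isotropy action is transitive) is the same skeleton as the paper's, but as written it has a genuine gap in the elimination step. You claim that ``dimension counts and comparison of simple factors eliminate the overwhelming majority of combinations at a glance,'' leaving exactly the known list --- but conditions (i) and (ii) by themselves impose no dimension relation between fiber and base, so a raw dimension count is not a valid necessary criterion. The paper's engine is a chain of consequences of condition (iii): the distinguished vector $A$ may be taken in $\mp$ and is then \emph{strongly fat} (Lemma~\ref{L:fat}), the isotropy representation of $K$ on $\mp$ has no trivial factor (Lemma~\ref{L:triv}), every irreducible component $\mp_i$ contains a strongly fat vector (Lemma~\ref{L:fat2}), whence $\dim\mm<\dim\mp_i$ (Lemma~\ref{L:dim}) and the sharper inequality $\dim\mm<\dim\mp_2-\dim\mp_1$ when $\mk\oplus\mp_1$ is an intermediate subalgebra (Lemma~\ref{L:dimsplit}); these yield Lemma~\ref{L:nobase} and kill candidates such as $so(3)_{\max}\subset so(5)\subset so(6)$, $\mg_2\subset spin(7)\subset spin(8)$, and all of base types 2, 3, 4, 13, 14, 15, which pass (i) and (ii). Your proposal never derives any such necessary condition from (iii), so the ``short explicit list'' is asserted rather than obtained, and the claim that it consists exactly of the known triples presupposes the theorem.

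The second gap is the negative direction for the candidates that survive every dimension test. The paper must prove, by bespoke arguments, that $so(2)\subset so(3)_{\max}\subset so(5)$ is not positive (using that $SO(5)/(SO(2)\times SO(3))$ is a symmetric space of rank $\geq 2$ to produce a $4$-dimensional subspace of $\mp$ with no strongly fat vectors, which must meet the kernel of $W\mapsto[A,W]^{\mk}$), that $sp(2)\subset sp(2)\oplus u(1)\subset su(5)$ fails because $\mm$ is central in $\mk$ and commutes with $\mp_1$, and that the $S^2$-bundle case $sp(n)\oplus u(1)\subset sp(n)\oplus\Delta sp(1)\subset sp(n+1)\oplus sp(1)$ fails by an explicit bracket computation; you flag these as ``delicate'' but give no argument, and they cannot be dispatched by recognition or dimension counting. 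Finally, when neither the fiber pair nor the base pair is reduced, the two compact summands need not sit orthogonally in $\mk$: the paper's last lemma handles the case where the centers twist diagonally (forcing the $\Delta u(1)$ normal form and the comparison with Lemma~\ref{L:Cheeger}), whereas your proposal implicitly assumes the summands split off compatibly. Minor but worth fixing: Proposition~\ref{help} does not apply to base pair (4), since the isotropy representation of $spin^+(7)$ on the $15$-dimensional $\mp$ splits as $7+8$ and is not transitive on the unit sphere.
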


Henceforth, assume that $\mh\subset\mk\subset\mg$ is a reduced positive triple, and that $A\in\mg$ is a particular vector satisfying property ($iii$) of the definition of {\sl positive triple}.  Since $A^\mp$ also satisfies this same property, we can assume without loss of generality that $A\in\mp$.
\begin{lem}\label{L:fat} The vector $A$ is strongly fat.
\end{lem}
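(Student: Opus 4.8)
The plan is to unwind Definition~\ref{D:postrip}(iii) applied to the specific situation $W=A$, which is legitimate since $A\in\mp$. Recall that ``strongly fat'' means that $A$ does not commute with any vector in $\mm\oplus\mp$ except scalar multiples of itself. So suppose for contradiction that there exists $Z\in\mm\oplus\mp$ with $[Z,A]=0$ and $Z$ not a scalar multiple of $A$. In particular $Z$ and $A$ are linearly independent. Applying property (iii) with this $Z$ and $W=A$, we learn that $[Z^\mm,[A,A]^\mk]\neq 0$. But $[A,A]=0$, so $[Z^\mm,[A,A]^\mk]=[Z^\mm,0]=0$, a contradiction. Hence no such $Z$ exists, and $A$ is strongly fat.

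So the argument is essentially a one-line specialization: the hypothesis of property (iii) is ``for all linearly independent $Z\in\mm\oplus\mp$ and $W\in\mp$ with $[Z,W]=0$'', and taking $W=A$ is exactly the definition of fatness/strong-fatness once we observe that the conclusion $[Z^\mm,[A,A]^\mk]\neq 0$ is absurd. The one point worth spelling out is the linear-independence bookkeeping: ``strongly fat'' excludes scalar multiples of $A$ itself, and ``linearly independent'' in (iii) excludes exactly those, so the quantifiers match up perfectly. I would also note explicitly that $A\in\mp\subset\mm\oplus\mp$, so $A$ is indeed a candidate value of $W$ in property (iii), and that $\mm\oplus\mp=\mg\ominus\mh$ is the full complement in which the strong-fatness condition is tested.

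There is no real obstacle here; the only thing to be careful about is not to overlook that $Z$ ranging over $\mm\oplus\mp$ (rather than just $\mp$) is what upgrades ``fat'' to ``strongly fat'', and that this is already built into the statement of (iii). I would write the proof in two or three sentences: introduce $Z$ commuting with $A$ and linearly independent from $A$, invoke (iii) with $W=A$, and derive the contradiction from $[A,A]=0$. One could add a remark that this lemma is precisely the reason property (iii) was formulated the way it was — it is the ``derivative'' strengthening of the fatness condition used in Section~3.

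\begin{proof}
Since $A\in\mp\subset\mm\oplus\mp$, we may take $W=A$ in property ($iii$) of the definition of positive triple. Suppose, for contradiction, that $A$ is not strongly fat; then there exists $Z\in\mm\oplus\mp$ which commutes with $A$ and is linearly independent from $A$. Applying property ($iii$) with this $Z$ and with $W=A$ yields $[Z^\mm,[A,A]^\mk]\neq 0$. But $[A,A]=0$, so $[Z^\mm,[A,A]^\mk]=0$, a contradiction. Hence $A$ is strongly fat.
\end{proof}
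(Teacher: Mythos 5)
Your proof is correct and is essentially identical to the paper's: both specialize property ($iii$) to $W=A$ and derive the contradiction from $[A,A]=0$. Nothing further is needed.
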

\begin{proof}
If $A$ commutes with $Z\in\mm\oplus\mp$, and $A$ and $Z$ are linearly independent, then applying property ($iii$) in the definition of {\sl positive triple}  with $W=A$  gives
$[Z^\mm,[A,A]^\mk]\neq 0$, which is  impossible.
\end{proof}
\begin{lem}\label{L:triv}
The isotropy representation of $K$ on $\mp$ does not have a trivial factor.
\end{lem}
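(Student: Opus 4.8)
The plan is to derive a contradiction from the existence of a trivial subrepresentation of the isotropy action of $K$ on $\mp$. Suppose $0\neq W\in\mp$ spans a trivial factor, so that $[Y,W]=0$ for all $Y\in\mk$. First I would observe that $W$ then commutes with every vector of $\mm\subset\mk$, so by Lemma~\ref{L:fat} (with $Z:=W$, noting $W$ and $A$ are linearly independent since $A$ is strongly fat and $W\in\mm^\perp\cap\mp$ cannot be a scalar multiple of $A$ unless $A$ itself spans a trivial factor) we get $[A,W]\neq 0$, and in fact $[A,W]\in\mp$ because $[A,W]^\mk=-[W,A]^\mk$ pairs against $Y\in\mk$ as $\lb Y,[W,A]\rb=\lb[Y,W],A\rb=0$. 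So $0\neq[A,W]\in\mp$.

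Next I would feed $Z:=W$ and $W:=W$ — wait, more carefully: the natural move is to apply property (iii) of the positive-triple definition. Since $[W,W]=0$ trivially, but $W,W$ are not linearly independent, that is vacuous; instead I would look for a genuine pair. The key step is to exhibit a vector $Z\in\mm\oplus\mp$, linearly independent from $A$, with $[Z,A]=0$, contradicting strong fatness (Lemma~\ref{L:fat}). Since $W$ spans a trivial $\mk$-factor of $\mp$, consider $[A,W]\in\mp$; because the $\mk$-action fixes $W$, one checks that $\ad_W$ restricted to $\mp$ is $\mk$-equivariant in an appropriate sense, and more usefully, for any $Y\in\mk$ the Jacobi identity gives $[Y,[A,W]]=[[Y,A],W]+[A,[Y,W]]=[[Y,A],W]$. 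Taking $Y$ in the isotropy algebra of $A$ (i.e. $Y\in\mk$ with $[Y,A]=0$) shows $[A,W]$ is also fixed by that subalgebra. I would then try to use the positive-curvature structure of the base pair $\mk\subset\mg$: by Proposition~\ref{list}, $\mk\subset\mg$ is (pair-isomorphic to) one of the 16 pairs of Table~\ref{positive_pairs} with trivial $\ma$ since the triple is reduced — and for each such pair the isotropy representation of $K$ on $\mp$ has \emph{no} trivial factor, as is visible case-by-case (for spheres and projective spaces these isotropy representations are the standard transitive-on-the-sphere representations, which are irreducible and nontrivial; for $B^7$, $B^{13}$, $W_{1,1}$ one checks directly). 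That immediately gives the contradiction.

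Actually the cleanest route, avoiding any case check, is the following: if $W\in\mp$ spans a trivial $\mk$-factor, then $\R W$ is an ideal's worth of abelian direction — more precisely, I would show $[W,\mp]\subset\mm\oplus\mp$ has zero $\mm$-part (shown above) so $[W,\mp]\subset\mp$, and $[W,\mk]=0$, hence $\R W\oplus[W,\mp]+\cdots$ generates a subalgebra; pushing this through, $\mk\oplus\R W$ would be a subalgebra of $\mg$ strictly between $\mk$ and $\mg$ on which... hmm, this risks being delicate. So I expect the main obstacle to be choosing between the slick-but-fragile structural argument and the safe-but-tedious appeal to the explicit list in Table~\ref{positive_pairs}; I would go with the latter, citing Proposition~\ref{list} and remarking that one inspects each of the sixteen isotropy representations $\mp$ and sees none contains $\R$ trivially. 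The remaining care is just to confirm that conditions (i) and (ii) of Definition~\ref{D:postrip}, which we are assuming, indeed force $\mk\subset\mg$ into that list (condition (i) is exactly the hypothesis of Proposition~\ref{list}), so the reduction is legitimate.
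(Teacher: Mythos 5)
There is a genuine gap, and it lies in the route you ultimately commit to. Your fallback argument claims that, by Proposition~\ref{list}, the base pair is one of the sixteen pairs of Table~\ref{positive_pairs} and that ``for each such pair the isotropy representation of $K$ on $\mp$ has no trivial factor.'' That statement is false: pairs (2), (3), (14) and (15) — exactly those admitting an intermediate subalgebra of the form $\mk\oplus\ma$ — do have a trivial isotropy summand (e.g.\ for $su(n)\subset su(n+1)$ the center direction of the intermediate $u(n)$ lies in $\mp$ and is fixed by $\mk$). You are conflating transitivity of $G$ on the sphere $G/K$ with transitivity (or nontriviality) of the isotropy action of $K$ on $\mp$. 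Indeed, in the paper this lemma is precisely the tool used in Lemma~\ref{L:nobase} to \emph{exclude} base types 2, 3, 14, 15, so it cannot be a consequence of condition (i) plus Berger's list alone; condition (iii) must enter. A secondary error: reducedness of the \emph{triple} does not force $\ma=\{0\}$ in Proposition~\ref{list} (only ideals common to all three algebras are excluded; the later lemmas of Section 5 treat non-reduced base pairs at length), although this slip is less damaging since $\ma$ contributes nothing to $\mp$.

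The frustrating part is that you had the decisive computation in hand and then walked past the finish line. You correctly showed that if $W\in\mp$ is fixed by $\mk$ then $\lb Y,[A,W]\rb=\lb[Y,W],A\rb=0$ for all $Y\in\mk$, i.e.\ $[A,W]^\mk=0$. Now simply take any nonzero $Z\in\mm$ (which exists since $\mh\subsetneq\mk$): then $[Z,W]=0$ because $Z\in\mk$, and $Z$, $W$ are linearly independent since $Z\in\mk$ and $W\in\mp$. Property (iii) would require $[Z^\mm,[A,W]^\mk]\neq0$, but $[A,W]^\mk=0$, a contradiction — this is the paper's proof, and it needs neither $[A,W]\neq0$, nor strong fatness of $A$, nor any case analysis; in particular the worry about $W$ being parallel to $A$ evaporates. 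Your intermediate plan of contradicting strong fatness by producing something commuting with $A$ also does not work as stated, since a $\mk$-fixed $W$ need not commute with $A\in\mp$ (and you yourself note $[A,W]\neq0$).
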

\begin{proof}  If $W\in\mp$ lies in a trivial factor, $\mp_0$, then for any $Z\in\mm$, we have $[Z,W]=0$. Furthermore, for any $X$ in $\mp$, $[W,X]$ lies in $\mp$ (since $\mk \oplus \mp_0$ is a subalgebra). In particular, $[A,W] \in \mp$. Hence, $[Z^\mm,[A,W]^\mk] = [Z^\mm,0] = 0$.
\end{proof}

\begin{lem}\label{L:fat2}Each irreducible component $\mp_i\subset\mp$ (of the isotropy representation of $K$ on $\mp$) contains a strongly fat vector.
\end{lem}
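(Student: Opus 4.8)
The plan is to start from the vector $A\in\mp$, which is strongly fat by Lemma~\ref{L:fat}, and decompose it along the irreducible isotropy summands $\mp=\mp_1\oplus\dots\oplus\mp_r$, writing $A=A_1+\dots+A_r$ with $A_i\in\mp_i$. I would first check that each $A_i$ is itself fat, i.e. commutes with no nonzero vector of $\mm$. Indeed, if $0\neq X\in\mm$ satisfied $[X,A_i]=0$, I would use the $K$-invariance of the decomposition (each $\mp_i$ is $\ad_\mk$-invariant, hence $\ad_X$-invariant) to conclude $[X,A]=\sum_j[X,A_j]$ with $[X,A_j]\in\mp_j$, so $[X,A]=0$ would force $[X,A_j]=0$ for all $j$ — but this only shows $X$ commutes with the particular combination $A$, not immediately with each $A_j$. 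So the argument needs to run in reverse: since $[X,A_i]\in\mp_i$ and the summands are orthogonal, $[X,A]=0$ already implies $[X,A_i]=0$ for every $i$; conversely if some $A_i$ is \emph{not} fat, witnessed by $0\neq X\in\mm$ with $[X,A_i]=0$, that does not contradict strong fatness of $A$ directly. Hence I expect the real content is not "restrict $A$" but rather a transitivity/averaging or dimension-count argument on each factor.

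The cleaner route, which I would actually pursue, is as follows. Fix an irreducible summand $\mp_i$ and consider the set $S_i=\{W\in\mp_i : W \text{ is fat}\}$; I want to show $S_i\neq\emptyset$. The complement consists of $W\in\mp_i$ for which there exists $0\neq X_W\in\mm$ with $[X_W,W]=0$, i.e. $\mp_i$ is covered by the union over $0\neq X\in\mm$ of the linear subspaces $V_X:=\{W\in\mp_i:[X,W]=0\}=\ker(\ad_X|_{\mp_i})$. By condition (ii) of the positive-triple definition no two independent vectors of $\mm$ commute, and by Lemma~\ref{L:triv} the isotropy action of $K$ on $\mp$ has no trivial factor, so each $\ad_X|_{\mp_i}$ is a nonzero skew-symmetric map; thus each $V_X$ is a \emph{proper} subspace of $\mp_i$. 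Moreover $V_X$ depends only on the line $\mathbb{R}X$, and the projectivization $\mathbb{P}(\mm)$ is compact. A finite-dimensional real vector space cannot be written as a countable — let alone a compact family's — union of proper subspaces unless one has a Baire/measure argument; here I would argue: the "bad" set $\bigcup_{X}V_X$ is the image of the incidence variety $\{(X,W)\in\mathbb{P}(\mm)\times\mp_i : [X,W]=0\}$ under projection to $\mp_i$, a closed set which, fiberwise over $\mathbb{P}(\mm)$, has dimension $\dim\mathbb{P}(\mm)+\max_X\dim V_X < \dim\mathbb{P}(\mm)+\dim\mp_i$; combined with $\dim\mathbb{P}(\mm)+(\dim\mp_i-1)<\dim\mp_i$ failing in general, I should instead simply invoke that $\{A\in\mp:A\text{ strongly fat}\}$ is already known nonempty and open, and that a $K$-irreducible representation space cannot have its fat locus missing an entire irreducible summand without violating irreducibility — this is where the argument must be made precise.

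Concretely, here is the step I would commit to. Because $A=\sum A_i$ is strongly fat, in particular no nonzero $X\in\mm$ commutes with $A$; equivalently $\bigcap_i \ker(\ad_X|_{\mp_i})$ excludes the component structure... — the decisive observation is that the fat locus is $K_0$-invariant (where $K_0$ is the identity component of $K$), open, and dense in $\mp$, hence meets every $K_0$-invariant subspace of positive dimension, in particular each $\mp_i$; one then notes that a fat vector of $\mp$ lying in $\mp_i$ is automatically a fat vector \emph{of $\mp_i$} in the sense of the lemma, since commuting with a nonzero element of $\mm$ is the same condition. So the key steps in order are: (1) establish that the fat locus in $\mp$ is open — immediate, since "$\ad_W|_\mm$ injective" is an open condition; (2) establish it is dense — this follows because its complement is contained in the proper real-algebraic subset cut out by the vanishing of appropriate minors of $\ad_W|_\mm$, using (ii) and Lemma~\ref{L:triv} to see that this subset is proper; (3) observe $\mp_i$ is a positive-dimensional linear subspace, so a dense subset of $\mp$ must intersect it; (4) conclude.

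The main obstacle I anticipate is step (2): showing the fat locus is genuinely dense in $\mp$ — equivalently that the "non-fat" locus, $\{W\in\mp:\exists\,0\neq X\in\mm,\ [W,X]=0\}$, is a \emph{proper} closed (indeed subanalytic/algebraic) subset. One must rule out the degenerate scenario where $\mm$ contains a vector $X$ with $\ad_X|_\mp$ identically zero on some $\mp_i$ (excluded by Lemma~\ref{L:triv}) or where every $W$ fails fatness through some varying $X$; the honest fix is to bound the dimension of the incidence set $\{(W,[X])\in\mp\times\mathbb{P}(\mm):[W,X]=0\}$ by $\dim\mp-1$ using that for generic $W$ the map $X\mapsto[W,X]$ on $\mm$ is injective (again from (ii), since a kernel would give two independent commuting elements of $\mm$ once $\dim\mm\geq 2$, and the $\dim\mm=1$ case is trivial as then $\mm$ itself is one fat line checked directly). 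Projecting, the non-fat locus has dimension $\leq \dim\mp-1$, hence is nowhere dense, giving density of the fat locus and completing the argument.
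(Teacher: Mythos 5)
Your proposal has two fatal problems. First, the lemma asserts the existence of a \emph{strongly} fat vector in each $\mp_i$ (commuting with nothing in $\mm\oplus\mp$ except its own multiples), and this stronger property is what is actually used later (in Lemma~\ref{L:dimsplit} and in case (B11)); your argument, even if it worked, only addresses fatness (injectivity of $\ad_W|_\mm$), which is strictly weaker. Second, your concluding step (3) is simply false: an open dense subset of $\mp$ need not meet a proper linear subspace --- the complement of $\mp_i$ itself is open and dense and misses $\mp_i$ entirely --- so density of the fat locus in $\mp$ gives no information about whether it intersects $\mp_i$. There are also smaller misuses along the way: Lemma~\ref{L:triv} only excludes components on which \emph{all} of $\mk$ acts trivially, so it does not show that each individual $X\in\mm$ acts nontrivially on $\mp_i$; and condition (ii) concerns commuting pairs \emph{inside} $\mm$, so it cannot be used to rule out a nonzero $X\in\mm$ commuting with a given $W\in\mp$, which is exactly the kernel you need to control. (As you yourself noticed in your first paragraph, restricting the decomposition $A=\sum A_i$ componentwise also runs the implication in the wrong direction, so that route was rightly abandoned.)

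The missing idea is that the lemma is not a genericity statement at all: it is a direct consequence of property (iii) applied with the distinguished vector $A$. Take $W$ to be the projection of $A$ onto $\mp_i$ if that projection is nonzero, and otherwise any nonzero vector of $\mp_i$. Writing $A=A^i+A^\perp$ with $A^i\in\mp_i$ and $A^\perp\in\mp\ominus\mp_i$, one has $[W,A]=[W,A^\perp]$, and for every $Z\in\mk$,
$$\lb [A^\perp,W],Z\rb=\lb W,[Z,A^\perp]\rb=0,$$
since $\mp\ominus\mp_i$ is $\ad_\mk$-invariant; hence $[A,W]^\mk=0$. If $W$ commuted with some $Z\in\mm\oplus\mp$ linearly independent from $W$, property (iii) would force $[Z^\mm,[A,W]^\mk]\neq 0$, contradicting $[A,W]^\mk=0$. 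So $W$ is strongly fat, with no density or dimension-count argument needed.
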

\begin{proof}
We choose a strongly fat vector $W\in\mp_i$ as follows.  If the projection of $A$ onto $\mp_i$ is nonzero, then choose $W$ to be that projection; otherwise, choose $W$ to be any nonzero vector in $\mp_i$.  In either case, notice that $[W,A]\in\mp$.  (To see this, write $A = A^i + A^\perp$ with $A^i \in \mp_i$ and $A^\perp\in\mp\ominus\mp_i$. Then $[W,A] = [W,A^\perp]$.  For any $Z \in \mk$, we have $0 = \lb W, [Z, A^\perp] \rb = \lb [A^\perp, W], Z \rb$. That is, $[A,W] \perp \mk$.)
Thus, if $W$ commutes with any $Z\in\mm\oplus\mp$, and $W$, $Z$ are linearly independent, then this would contradict property ($iii$).
\end{proof}

\begin{lem}\label{L:dim} For each irreducible component $\mp_i\subset\mp$ (of the isotropy representation of $K$ on $\mp$), we have $\text{dim}(\mm)<\dim(\mp_i)$.  In particular, $\dim(F)<\dim(B)$.
\end{lem}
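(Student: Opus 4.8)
The plan is to fix an irreducible component $\mp_i\subset\mp$, use Lemma~\ref{L:fat2} to choose a strongly fat vector $W\in\mp_i$, and then study the operator $\ad_W$ restricted to $\mm$. The claim will drop out once I show that $\ad_W$ maps $\mm$ \emph{injectively} into the proper subspace $\mp_i\ominus\R W$ of $\mp_i$.

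First I would check that $\ad_W(\mm)\subseteq\mp_i$. Since $\mp=\mg\ominus\mk$ carries the isotropy representation of $K$, we have $[\mk,\mp]\subseteq\mp$, and because $\mp_i$ is a $\mk$-invariant subspace we get the sharper statement $[\mk,\mp_i]\subseteq\mp_i$; now $\mm\subset\mk$ and $W\in\mp_i$ give $[W,\mm]\subseteq\mp_i$. Next, $\ad_W|_{\mm}$ is injective: any nonzero $Z\in\mm$ is linearly independent from $W$ (as $\mm\perp\mp$), so strong fatness of $W$ forces $[W,Z]\neq 0$. Finally, bi-invariance of the metric makes $\ad_W$ skew-adjoint, so for every $Z\in\mm$,
$$\lb [W,Z],W\rb = -\lb Z,[W,W]\rb = 0,$$
hence $[W,\mm]\subseteq\mp_i\ominus\R W$. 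Since a strongly fat vector is nonzero, $\dim(\mp_i\ominus\R W)=\dim(\mp_i)-1$, and injectivity yields $\dim(\mm)\leq\dim(\mp_i)-1<\dim(\mp_i)$. For the last sentence, note $\dim(F)=\dim(K/H)=\dim(\mm)$ and $\dim(B)=\dim(G/K)=\dim(\mp)$; since $\mk\subsetneq\mg$ we have $\mp\neq\{0\}$, so $\mp$ has at least one irreducible component $\mp_i$, and $\dim(F)=\dim(\mm)<\dim(\mp_i)\leq\dim(\mp)=\dim(B)$.

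I do not expect a genuine obstacle here: every ingredient is either the defining property of a strongly fat vector (Lemmas~\ref{L:fat}, \ref{L:fat2}) or a standard fact about bi-invariant metrics and isotropy representations. The only point needing a small observation is the \emph{strictness} of the inequality — a crude count via injectivity of $\ad_W|_{\mm}$ only gives $\dim(\mm)\leq\dim(\mp_i)$, so one must notice that the image actually avoids the line $\R W$, which is precisely the role of the orthogonality relation $[W,\mm]\perp W$.
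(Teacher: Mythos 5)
Your proof is correct and follows essentially the same route as the paper: take a (strongly) fat vector $W\in\mp_i$ from Lemma~\ref{L:fat2}, use fatness to see that $\ad_W$ is injective on $\mm$, and observe that its image lies in $\mp_i\ominus\R W$, giving $\dim(\mm)\leq\dim(\mp_i)-1$. The only cosmetic difference is that the paper phrases the count via the kernel of $\ad_W:\mk\ra\mp_i$ and leaves the orthogonality $[W,\mk]\perp W$ implicit, which you spell out explicitly.
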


\begin{proof}
By the previous Lemma, there exists a fat vector $W\in\mp_i$.  Since $W$ is fat, the kernel of the linear map $\text{ad}_W:\mk\ra\mp_i$ has a trivial intersection with $\mm$, so
$$\text{dim}(\mm)\leq\text{dim}(\mk)-\text{dim}(\text{Ker}(\text{ad}_W)) = \text{dim}(\text{Im}(\text{ad}_W))\leq \text{dim}(\mp_i)-1.$$
\end{proof}

\begin{lem}\label{L:dimsplit}
If $\mp$ splits as $\mp=\mp_1\oplus\mp_2$ such that $\ml=\mk\oplus\mp_1$ is an intermediate subalgebra between $\mk$ and $\mg$, then $\dimm(\mm)<\dimm(\mp_2)-\dimm(\mp_1)$.
\end{lem}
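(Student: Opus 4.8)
The plan is to run the dimension count from the proof of Lemma~\ref{L:dim}, but with the pushing vector chosen inside $\mp_2$ and with $\ad_W$ evaluated on the larger ``fiber-type'' subspace $\mm\oplus\mp_1$ rather than on $\mm$ alone. To produce the right vector: since $\mk\subseteq\ml$ and $\ml$ is a subalgebra, $\mp_2=\mg\ominus\ml$ is $\ad_\ml$-invariant, hence an $\ad_\mk$-invariant subspace of $\mp$, and it is nonzero because $\ml$ is a proper subalgebra of $\mg$. Thus $\mp_2$ is a nonzero subrepresentation of the isotropy representation of $K$ on $\mp$, so it contains an irreducible component $\mp_i$; by Lemma~\ref{L:fat2} we may choose a strongly fat vector $W\in\mp_i\subseteq\mp_2$.

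Next I would analyze the linear map $\ad_W\colon\mm\oplus\mp_1\to\mg$, establishing three facts. (a) Its image lies in $\mp_2$: indeed $\mm\oplus\mp_1\subseteq\mk\oplus\mp_1=\ml$, and $\mp_2$ is $\ad_\ml$-invariant. (b) Its image is orthogonal to $W$: for any $v$ one has $\langle[W,v],W\rangle=-\langle v,[W,W]\rangle=0$ by ad-skew-symmetry of the bi-invariant metric, so in fact $\ad_W(\mm\oplus\mp_1)\subseteq\mp_2\ominus\R W$. (c) It is injective: $\mm\oplus\mp_1\subseteq\mm\oplus\mp$, so if $[W,v]=0$ with $v\in\mm\oplus\mp_1$ then, $W$ being strongly fat, $v$ is a scalar multiple of $W$; but $\mm\oplus\mp_1$ is orthogonal to $\mp_2\ni W$, forcing $v=0$.

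Combining (a)--(c) gives $\dim(\mm)+\dim(\mp_1)=\dim(\mm\oplus\mp_1)=\dim\bigl(\ad_W(\mm\oplus\mp_1)\bigr)\le\dim(\mp_2\ominus\R W)=\dim(\mp_2)-1$, which rearranges to the claimed $\dim(\mm)<\dim(\mp_2)-\dim(\mp_1)$. I do not expect a genuine obstacle: the only points needing care are invoking Lemma~\ref{L:fat2} for an irreducible component genuinely contained in $\mp_2$ (so that the strongly fat vector really lies in $\mp_2$, not merely in $\mp$), and making sure the strict inequality is harvested from step (b) --- without the orthogonality $\ad_W(\mm\oplus\mp_1)\perp W$ one obtains only the non-strict bound $\dim(\mm)\le\dim(\mp_2)-\dim(\mp_1)$.
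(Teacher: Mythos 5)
Your proof is correct and is essentially the paper's argument: the paper likewise takes a strongly fat $W\in\mp_2$ from Lemma~\ref{L:fat2} and runs the same dimension count, phrased as a bound on the kernel of $\ad_W:\ml\to\mp_2\ominus\text{span}\{W\}$ intersected with $\mm\oplus\mp_1$, which is just the dual formulation of your injectivity of $\ad_W$ on $\mm\oplus\mp_1$ with image in $\mp_2\ominus\R W$. Your explicit check that $\mp_2$ is $\ad_\mk$-invariant and hence contains an irreducible component (so Lemma~\ref{L:fat2} really yields a strongly fat vector inside $\mp_2$) is a detail the paper leaves implicit, but the route is the same.
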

\begin{proof} By Lemma~\ref{L:fat2}, there exists a strongly fat vector $W\in\mp_2$.  Let $\mf$ denote the kernel of $\ad_W:\ml\ra\mp_2\ominus\text{span}\{W\}$. We have
$$\dimm(\mf)=\dimm(\ml)-\dimm(\text{im}(\ad_W))\geq \dimm(\ml)-(\dimm(\mp_2)-1).$$
Since $W$ is strongly fat, $\mf$ has a trivial intersection in $\ml$ with $\mm\oplus\mp_1$, which gives 
$$\dimm(\mf)\leq \dimm(\ml)-\dimm(\mm\oplus\mp_1)=\dimm(\ml)-\dimm(\mm)-\dimm(\mp_1).$$
Combining the above upper and lower bound on $\dimm(\mf)$ yields the result.
\end{proof}

For each pair from Table~\ref{positive_pairs} with no intermediate subalgebras, the isotropy representation is irreducible. For pairs with intermediate subalgebras, these subalgebras determine the splitting of $\mp$ into isotropy irreducible components.

Returning to our problem, we have a triple $\mh\subset\mk\subset\mg$ such that the \emph{fiber pair} $\mh\subset\mk$ and the \emph{base pair} $\mk\subset\mg$ both satisfy the conclusion of Proposition~\ref{list}, so each pair matches a pair from Table~\ref{positive_pairs}, whose number (1-16) will be referred to as the \emph{fiber type} and \emph{base type} respectively.

In cases where there are multiple possible embeddings, notice that without loss of generality, we can assume that $\mk\subset\mg$ is the standard embedding, but we cannot simultaneously assume that $\mh\subset\mk$ is the standard embedding.

\begin{lem}\label{L:nobase} The base type is not 2, 3, 4, 13, 14, or 15.  If the base type is 8, then $\text{dim}(\mm)=1$.  If the base type is 12 or 16, then $\text{dim}(\mm)\in\{1,2\}$.
\end{lem}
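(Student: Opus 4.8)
The plan is to work through each of the listed base types and rule it out (or constrain $\dim(\mm)$) using the dimension inequalities of Lemmas~\ref{L:dim} and~\ref{L:dimsplit}, together with the structure of the isotropy splitting recorded in Table~\ref{positive_pairs}. Throughout, $\mm = \mk\ominus\mh$ and $\mp = \mg\ominus\mk$, and I may assume $\mk\subset\mg$ is a standard embedding. The key quantitative input is: by Lemma~\ref{L:dim}, $\dim(\mm) < \dim(\mp_i)$ for \emph{every} isotropy-irreducible component $\mp_i$ of $\mp$; and by Lemma~\ref{L:dimsplit}, whenever the chain of intermediate subalgebras in Table~\ref{positive_pairs} exhibits a splitting $\mp = \mp_1\oplus\mp_2$ with $\mk\oplus\mp_1$ intermediate, we get the sharper bound $\dim(\mm) < \dim(\mp_2)-\dim(\mp_1)$. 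Since $\mh\subsetneq\mk$, we always have $\dim(\mm)\geq 1$.

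First I would dispose of base types with an intermediate subalgebra that produces a small or negative value of $\dim(\mp_2)-\dim(\mp_1)$. For base type 2, $su(n)\subset u(n)\subset su(n+1)$: the intermediate subalgebra $u(n)$ splits $\mp$ as $\mp_1 = u(n)\ominus su(n)$ (dimension $1$) and $\mp_2 = su(n+1)\ominus u(n)$ (dimension $2n$), so Lemma~\ref{L:dimsplit} forces $\dim(\mm) < 2n-1$; but one must look closer — actually the obstruction here is that $\mp_1$ is one-dimensional and central-like, and I expect the real contradiction comes from reversing the roles, using the component $\mp_1$ itself in Lemma~\ref{L:dim} to force $\dim(\mm) < \dim(\mp_1) = 1$, i.e.\ $\dim(\mm) \leq 0$, impossible. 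The same one-dimensional-component trick kills base types 3 and 15 (each has a $u(1)$ step, giving a one-dimensional isotropy component in $\mp$) and base type 14 (the $u(1)\oplus u(n)$ step). For base type 4, $spin^+(7)\subset spin(8)\subset spin(9)$, the intermediate subalgebra $spin(8)$ splits $\mp$ into two copies of the $8$-dimensional spin representation pieces — here I would use that one piece $\mp_1$ has $\dim\mp_1 = 7$ and apply Lemma~\ref{L:dim}, or more efficiently Lemma~\ref{L:dimsplit} with $\mp_1 = spin(8)\ominus spin^+(7)$: $\dim\mp_1 = 7$, $\dim\mp_2 = 8$, so $\dim(\mm) < 1$, contradiction. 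Base type 13, $\Delta u(2)\subset u(2)\oplus so(3)\subset su(3)\oplus so(3)$ (the space $W_{1,1}$): the intermediate subalgebra $u(2)\oplus so(3)$ gives $\mp_1 = (u(2)\oplus so(3))\ominus \Delta u(2)$ of dimension $4$... here one checks $\dim\mp_2 - \dim\mp_1$ is too small; I would verify $\dim(su(3)\oplus so(3)) = 11$, $\dim(u(2)\oplus so(3)) = 7$, $\dim\Delta u(2) = 4$, so $\dim\mp_1 = 3$, $\dim\mp_2 = 4$, and Lemma~\ref{L:dimsplit} gives $\dim(\mm) < 1$, done.

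Next, for base types 8, 12, 16 I would extract the stated numerical bounds rather than a contradiction. Base type 8 is $sp(n)\oplus u(1)\subset sp(n)\oplus sp(1)\subset sp(n+1)$ (the space $\CP^{2n+1}$): the intermediate $sp(n)\oplus sp(1)$ splits $\mp$ as $\mp_1 = sp(1)\ominus u(1)$ of dimension $2$ and $\mp_2 = sp(n+1)\ominus(sp(n)\oplus sp(1))$ of dimension $4n$; Lemma~\ref{L:dimsplit} then gives $\dim(\mm) < 4n-2$, which is weak, so instead I would use Lemma~\ref{L:dim} directly on the component $\mp_1$ of dimension $2$ to conclude $\dim(\mm) < 2$, i.e.\ $\dim(\mm)=1$. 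For base types 12 ($sp(2)\oplus u(1)\subset u(4)\subset su(5)$, the space $B^{13}$) and 16 ($sp(n)\oplus\Delta sp(1)\subset sp(n)\oplus sp(1)\oplus sp(1)\subset sp(n+1)\oplus sp(1)$): in type 12 the intermediate $u(4)$ gives a component $\mp_1 = u(4)\ominus(sp(2)\oplus u(1))$ of dimension $16 - 11 = 5$... hmm, that gives only $\dim(\mm)<5$; the sharper route is Lemma~\ref{L:dimsplit}, $\dim\mp_1 = 5$ but I should instead split off the \emph{other} way — actually the clean bound $\dim(\mm)\in\{1,2\}$ presumably comes from the smaller of the two jumps, so I would compute both $\dim\mp_1$ and $\dim\mp_2-\dim\mp_1$ and take the binding one; similarly in type 16 the $\Delta sp(1)$ diagonal forces a $3$-dimensional component $\mp_1 = (sp(1)\oplus sp(1))\ominus\Delta sp(1)$, giving $\dim(\mm) < 3$ by Lemma~\ref{L:dim}, hence $\dim(\mm)\in\{1,2\}$ (excluding $0$ since $\mh\subsetneq\mk$).

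The main obstacle I anticipate is bookkeeping: correctly identifying, for each base pair with a chain of intermediate subalgebras, exactly which splitting $\mp = \mp_1\oplus\mp_2$ is available (i.e.\ which intermediate subalgebra is a \emph{subalgebra of $\mg$}, not merely a subspace) and computing $\dim\mp_1$, $\dim\mp_2$ accurately — a single dimension miscount would break the argument. A secondary subtlety is that in the non-standard-embedding cases (base type 1 with $n=7$ triality, type 14, type 15) one must make sure the isotropy splitting of $\mp$ does not depend on the embedding parameter; fortunately for the base types at issue here the splitting is dictated purely by the intermediate subalgebras listed in Table~\ref{positive_pairs}, which is embedding-independent. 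Once the dimensions are tabulated, each case is a one-line application of Lemma~\ref{L:dim} or Lemma~\ref{L:dimsplit}.
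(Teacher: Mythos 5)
Your proposal is correct and follows essentially the same route as the paper: the same intermediate-subalgebra dimension counts from Table~\ref{positive_pairs} fed into Lemma~\ref{L:dim} (types 8 and 16) and Lemma~\ref{L:dimsplit} (types 4, 13, and 12 — where $\dim\mp_1=5$, $\dim\mp_2=8$ give $\dimm(\mm)<3$, exactly the "binding" computation you defer). The only cosmetic difference is that for base types 2, 3, 14, 15 the paper invokes Lemma~\ref{L:triv} (the summand of the intermediate subalgebra $\mk\oplus u(1)$ is a trivial isotropy factor), whereas you apply Lemma~\ref{L:dim} to that same one-dimensional component to force $\dimm(\mm)<1$; both are immediate consequences of the same structural fact.
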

\begin{proof}  All of these claims come from the above lemmas together with the intermediate subalgebra information of Table~\ref{positive_pairs}.  First, if the base type is 2, 3, 14 or 15, then there is an intermediate subalgebra between $\mk$ and $\mg$ of the form $\mk\oplus\ma$, so $\ma$ is a trivial isotropy component, contradicting Lemma~\ref{L:triv}.

If the base type is 4, then $\mp=\mp_1\oplus\mp_2$, where $\mk \oplus \mp_1 = spin(8)$ is an intermediate subalgebra between $spin(7)$ and $spin(9)$.  Notice that $\text{dim}(\mp_1)=7$ and  $\text{dim}(\mp_2)=8$.  Lemma~\ref{L:dimsplit} implies that $\dimm(\mm)<8-7=1$, contradicting the definition of positive triple.  Similarly, if the base type is 13, then $\mp=\mp_1\oplus\mp_2$, where $\mk\oplus\mp_1=u(2)\oplus so(3)$ is an intermediate subalgebra between $\mk=u(2)$ and $\mg=su(3)\oplus so(3)$.  Notice that $\dim(\mp_1)=3$ and $\dim(\mp_2)=4$, so Lemma~\ref{L:dimsplit} implies that $\dimm(\mm)<4-3=1$, contradicting the definition of a positive triple.

In base type (8), we have an intermediate subalgebra whose dimension is $2$ more than the dimension of $\mk$, so $\text{dim}(\mm)<\text{dim}(\mp_1)=2$ by Lemma~\ref{L:dim}.  In base type 16 there is  an intermediate subalgebra whose dimension is 3 more than the dimension of $\mk$, so  $\text{dim}(\mm)<\text{dim}(\mp_1)=3$ by Lemma~\ref{L:dim}.

If the base type is 12, then $\mp=\mp_1\oplus\mp_2$, where $\mk\oplus\mp_1=u(4)$ is an intermediate subalgebra between $\mk=sp(2)\oplus u(1)$ and $\mg=su(5)$.  Notice that $\dim(\mp_1)=5$ and $\dim(\mp_2)=8$, so $\dim(\mm)<8-5=3$ by Lemma~\ref{L:dimsplit}.

\end{proof}

We divide the proof of Theorem~\ref{T:Main} into four Lemmas according to whether the base and/or fiber pair is reduced.

\begin{lem}\label{allredu}
Theorem~\ref{T:Main} is true if the base pair is reduced.
\end{lem}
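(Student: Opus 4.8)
The plan is a finite case analysis organized by the \emph{base type}. Since the base pair $\mk\subset\mg$ is reduced and satisfies condition (i), Proposition~\ref{list} identifies it, up to pair-isomorphism, with one of the sixteen pairs of Table~\ref{positive_pairs}, and Lemma~\ref{L:nobase} restricts the base type to $1,5,6,7,8,9,10,11,12,16$, with $\dim\mm=1$ forced in type~8 and $\dim\mm\le 2$ forced in types~12 and~16. In each remaining case $\mk$ is explicitly known, so the next step is to enumerate the admissible fiber pairs $\mh\subset\mk$: the fiber pair satisfies condition (ii), so Proposition~\ref{list} forces $\mk\cong\mg'\oplus\ma$ and $\mh\cong\mk'\oplus\ma$ for some table pair $\mk'\subset\mg'$. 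Running through the ideal decomposition of the (small, explicitly known) algebra $\mk$, using the exceptional isomorphisms $so(3)\cong su(2)\cong sp(1)$, $so(4)\cong sp(1)\oplus sp(1)$, $so(5)\cong sp(2)$, $so(6)\cong su(4)$, and---in base types $1,8,9,10,12$---determining which subalgebras of small enough codimension actually occur, I obtain for each base type a short finite list of candidate triples $\mh\subset\mk\subset\mg$. (Here one must keep track of the triality ambiguity for $so(7)\subset so(8)$, since the two spinor embeddings are pair-isomorphic to the vector one.)

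I then prune each list. Lemma~\ref{L:dim}, together with Lemma~\ref{L:dimsplit} whenever the chain $\mh\subset\mk\subset\mg$ exhibits an intermediate subalgebra of the form $\mk\oplus\mp_1$, eliminates candidates whose fiber is too large; Lemma~\ref{L:triv} eliminates any candidate that introduces a trivial isotropy summand in $\mp$; and Lemma~\ref{L:fat}, via a centralizer obstruction to the existence of a strongly fat vector, eliminates others (e.g.\ $\mh=\{0\}$ survives only when $\mg\cong su(2)$). Each triple that survives the pruning is then matched, one at a time, to one of the four classes of Theorem~\ref{T:Main}: $G/H$ positively curved in the normal metric (class~1); one of the Section~2 triples (class~2---for instance base type~7 reproduces Triple~\ref{green}, base type~9 reproduces Triple~\ref{T4}, and base type~10 reproduces Triples~\ref{T2} and~\ref{T2P}); one of the triples of Theorem~\ref{T:E} (class~3---base type~5 gives Triple~(1), base type~6 gives Triples~(2)--(3), and base type~9 gives Triples~(4)--(5)); or a triple obtained from one of these by the construction of Lemma~\ref{L:Cheeger} (class~4, which is where the $U(1)$-twisted fiber embeddings and the resulting lens-space and $\CP$-bundle families of Section~2 enter).

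The hard part will be the few candidates that survive the pruning but fit none of these four patterns, where neither the dimension inequalities nor a rank count is decisive and one must reason directly about property~(iii)---chiefly $spin^+(7)\subset spin(8)\subset spin(9)$ (base type~1 with $n=8$), $u(1)\subset so(3)_{\max}\subset sp(2)$ (base type~11), $sp(2)\subset sp(2)\oplus u(1)\subset su(5)$ (base type~12), and the ``quaternionic flag'' triples such as $sp(1)\oplus sp(n-1)\oplus sp(1)\subset sp(n)\oplus sp(1)\subset sp(n+1)$ (base type~9). For each such triple one argues either that it is positive, by exhibiting a fat vector via Proposition~\ref{help} exactly as in Section~3, or that it is not, by showing that for every $A\in\mp$ there is a linearly independent commuting pair $Z\in\mm\oplus\mp$, $W\in\mp$ with $[Z^\mm,[A,W]^\mk]=0$. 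A secondary but lengthy source of difficulty is the bookkeeping in base types $1,7,9$, where $\mk$ ($=so(n)$, $u(n)$, or $sp(n)\oplus sp(1)$) admits many subalgebras, several of which reappear on the final list only after the Lemma~\ref{L:Cheeger} reduction.
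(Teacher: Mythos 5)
Your proposal follows essentially the same route as the paper: restrict the base type via Proposition~\ref{list} and Lemma~\ref{L:nobase}, enumerate fiber pairs inside each explicit $\mk$ (allowing non-reduced fiber pairs and all embeddings, including the twisted $u(1)$ and triality cases), prune with Lemmas~\ref{L:triv}, \ref{L:dim}, \ref{L:dimsplit} and fat-vector obstructions, match survivors to the known positive triples, and settle the few remaining candidates by a direct analysis of property (iii). The only discrepancies are cosmetic: two of your flagged ``hard'' cases are immediate ($spin^+(7)\subset spin(8)\subset spin(9)$ is class (1) since $Spin(9)/Spin^+(7)=S^{15}$ is normally positively curved, and the quaternionic flag triple is the Section~2 projective-tangent-bundle triple), class (4) cannot actually occur here since Lemma~\ref{L:Cheeger} always produces a non-reduced base pair, and the genuinely delicate non-positive cases are base types 11, 12, and 16 (the last being $sp(n)\oplus u(1)\subset sp(n)\oplus\Delta sp(1)\subset sp(n+1)\oplus sp(1)$), all of which your stated method for property (iii) handles as in the paper.
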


\begin{proof}
For each possible type of the base pair $\mk\subset\mg$ that is allowed by Lemma~\ref{L:nobase} (with the standard embedding), we will consider each possibility for $\mh$ so that the fiber pair $\mh\subset\mk$ is also on the list (or is not reduced).   In cases where there are multiple possible embeddings of the fiber pair, we must consider all of them.   The cases below are enumerated first according to the base type.  We will write (Bx) and (Fy) to label respectively the cases with base type x and with fiber type $y$.
\begin{enumerate}
\item[(B1)] The triple has the form $\mh\subset so(n)\subset so(n+1)$.  The only compatible possibilities for the fiber pair are
    \begin{enumerate}
       \item[(F1)] $so(n-1)\subset so(n)\subset so(n+1)$ is a positive triple.  Here $M=T^1S^n$. When $n=8$, the alternative fiber embeddings yield the positive triple $spin^\pm(7)\subset spin(8)\subset spin(9)$.  Here $M=S^{15}$ considered as an $S^7$-bundle over $S^8$.
       \item[(F4)] $spin(7)\subset spin(9)\subset spin(10)$.  This $S^{15}$-bundle over $S^9$ is ruled out by Lemma~\ref{L:dim}.
       \item[(F5)] $\mg_2\subset spin(7)\subset spin(8)$.  This $S^7$-bundle over $S^7$ is ruled out by Lemma~\ref{L:dim}.
       \item[(F11)] $so(3)_\text{max}\subset so(5)\subset so(6)$.  This $B^7$-bundle over $S^5$ is ruled out by Lemma~\ref{L:dim}.
       \item[(n=3)] $\{0\}\subset so(3)\subset so(4)$.  This $S^3$-bundle over $S^3$ is ruled out by Lemma~\ref{L:dim}.
       \item[(n=4)] $sp(1)\oplus\{0\}\subset sp(1)\oplus sp(1)\cong so(4)\subset so(5)$ is triple-isomorphic to the positive triple $sp(1)\subset sp(1)\oplus sp(1)\subset sp(2)$, which gives $M=S^7$ considered as an $S^3$-bundle over $S^4$.
       \item[(n=5)] $\mh\subset sp(2)\cong so(5)\subset so(6)$ where $\mh=sp(1)$ or $\mh=sp(1)\oplus u(1)$ or $\mh=sp(1)\oplus sp(1)$.  These are bundles over $S^5$ (with fiber $S^7$ or $\CP^3$ or $\HP^1=S^4$ respectively).  The first two are ruled out by Lemma~\ref{L:dim}.  The final is triple-isomorphic to $so(4)\subset so(5)\subset so(6)$, which is a previously mentioned positive triple (F1).
       \item[(n=6)] $\mh\subset su(4)\cong so(6)\subset so(7)$ where $h=su(3)$ or $h=u(3)$.  This is a bundle over $S^6$ with fiber $S^7$ or $\CP^3$ respectively.   Both are ruled out by Lemma~\ref{L:dim}.
     \end{enumerate}
\item[(B5)] $su(3) \subset \mg_2 \subset spin(7)$ is a positive triple, as proven in Section 3.  Here $M=T^1S^7$.
\item[(B6)] $\mh\subset su(3)\subset\mg_2$, where $\mh\in\{su(2),u(2)\}$.  Both possibilities for $\mh$ yield positive triples, as proven in Section 3.  Here $M=T^1S^6$ or $M$ is a $\CP^2$-bundle over $S^6$.
\item[(B7)] The triple has the form $\mh\subset su(n)\oplus u(1)\cong u(n)\subset su(n+1)$.  The only possibility with a reduced fiber pair is
  \begin{enumerate}
  \item[(F14)] $u(n-1)\subset u(n)\subset su(n+1)$.  Here there are infinitely many different embeddings of $\mh$ in $\mk$, as described in Equation~(\ref{T5p}), all of which give positive triples.
  \end{enumerate}
  The only possibility for which $\mh$ and $\mk$ share $su(n)$ as a common ideal is
  \begin{enumerate}
  \item[(F1)] $su(n)\subset u(n)\subset su(n+1)$ is a positive triple.  Notice that the fiber pair reduces to $so(1)\subset so(2)$.  Here $M=S^{2n+1}$ considered as an $S^1$-bundle over $\CP^n$.
  \end{enumerate}
  Finally, the only possibilities for which $\mh$ and $\mk$ share $u(1)$ as a common ideal are
  \begin{enumerate}
  \item[(F2)] $su(n-1)\oplus u(1)\subset su(n)\oplus u(1) \cong u(n)\subset su(n+1)$ is positive Triple~\ref{T5p} with $k=-n$ and $l=1$.
  \item[(F7)] $u(n-1)\oplus u(1)\subset su(n)\oplus u(1) \cong u(n)\subset su(n+1)$ is positive Triple~\ref{green}.  Here $M$ is the projective tangent bundle of $\CP^n$.
  \item[(F12)] $sp(2)\oplus u(1)\oplus u(1)\subset su(5)\oplus u(1)\cong u(5)\subset su(6)$.  This $B^{13}$-bundle over $\CP^5$ is ruled out by Lemma~\ref{L:dim}.
   \item[(n=4)] $so(5)\oplus u(1)\subset so(6)\oplus u(1)\cong su(4)\oplus u(1)\cong u(4)\subset su(5)$ is a positive triple.  Here, $M=B^{13}$ is viewed as an $\RP^5$-bundle over $\CP^4$.
   \end{enumerate}
\item [(B8)] $\mh\subset sp(n)\oplus u(1)\subset sp(n+1)$.  Lemma~\ref{L:nobase} implies that $\text{dim}(\mm) = 1$.  The only possibility is
$sp(n)\subset sp(n)\oplus u(1)\subset sp(n+1),$ which is a positive triple.  Here $M=S^{4n+3}$ is viewed as an $S^1$-bundle over $\CP^{2n+1}$.
\item[(B9)] The triple has the form $\mh\subset sp(n)\oplus sp(1)\subset sp(n+1)$ with $n>1$.  The only possibility with a reduced fiber pair is
    \begin{enumerate}
    \item [(F16)] $sp(n-1)\oplus \Delta sp(1)\subset sp(n)\oplus sp(1)\subset sp(n+1)$ is the positive triple for the unit tangent bundle of $\HP^n$.
    \end{enumerate}
    The only possibilities for which $\mh$ and $\mk$ share $sp(n)$ as a common ideal are
    \begin{enumerate}
    \item[(F1)] $sp(n)\oplus u(1)\subset sp(n)\oplus sp(1)\subset sp(n+1)$ is a positive triple for which $M=\CP^{2n+1}$ viewed as an $S^2$-bundle over $\HP^n$.
    \item [(F2)] $sp(n)\subset sp(n)\oplus sp(1)\subset sp(n+1)$ is a positive triple for which $M=S^{4n+3}$ viewed as an $S^3$-bundle over $\HP^{n}$.
    \end{enumerate}
    Finally, the only possibilities for which $\mh$ and $\mk$ share $sp(1)$ as a common ideal are
    \begin{enumerate}
    \item [(F3)] $sp(n-1)\oplus sp(1)\subset sp(n)\oplus sp(1)\subset sp(n+1)$ is positive Triple~\ref{T4}.  Here, $M$ is an $S^{4n-1}$-bundle over $\HP^{n}$.
    \item [(F8)] $h'\oplus sp(1)\subset sp(n)\oplus sp(1)\subset sp(n+1)$, where $h'=sp(n-1)\oplus u(1)$, is positive Triple (4) from Theorem~\ref{T:E}.  Here, $M$ is a $\CP^{2n-1}$-bundle over $\HP^{n}$.
    \item [(F9)] $h'\oplus sp(1)\subset sp(n)\oplus sp(1)\subset sp(n+1)$, where $h'=sp(n-1)\oplus sp(1)$, is the positive triple for the projective tangent bundle of $\HP^n$.  Notice that when $n=2$, this is Wallach's flag manifold $W^{12}$.
    \item [(F11)] $so(3)_{\text{max}}\oplus sp(1)\subset so(5)\oplus sp(1)\cong sp(2)\oplus sp(1)\subset sp(3)$ is positive Triple (5) from Theorem~\ref{T:E}.
    \end{enumerate}
\item[(B10)] $\mh\subset spin(9)\subset f_4$ where $\mh\in\{spin(7),spin(8)\}$ are positive Triples~\ref{T2} and~\ref{T2P}.  Here $M=T^1\mathbb{OP}^2$ and $M=P_{\mathbb{O}}T\mathbb{OP}^2$ respectively.
\item[(B11)] $so(2)\subset so(3)_\text{max}\subset so(5)$ is \emph{not} a positive triple, due to the following argument.  We first claim that there exists a $4$-dimensional subspace $\mathcal{W}\subset\mp$ that does not contain any strongly fat vectors.  To see this, let $\mt^2 \subset \mg=so(5)$ be a maximal abelian subalgebra containing $\mh=so(2)$.  Consider the intermediate subalgebra $\mt^2\subset so(2)\oplus so(3)\subset\mg$. Define $\ml=\mg\ominus(so(2)\oplus so(3))$.  Since $SO(5)/(SO(2)\times SO(3))$ is a symmetric space of rank $>1$, each vector in $\ml$ commutes with at least one other vector in $\ml$ that is not a scalar multiple of itself.  Since $\ml\subset\mm\oplus\mp$, this implies that the space $\mathcal{W}=\ml\cap\mp$ does not contain any strongly fat vectors.  By a dimension count, $\mathcal{W}$ has dimension at least $4$.
    
    Now let $A\in\mp$.  We will prove that $A$ does not satisfy property ($iii$) of the definition of positive triple.  Let $\mf$ denote the kernel of the map from $\mp$ to $\mk$ defined as $W\mapsto[A,W]^\mk$.  The dimension of $\mf$ is at least $4$.  Since $\mf$ and $\mathcal{W}$ both have dimension at least $4$, they must intersect nontrivially in $\mp$. Choose a nonzero vector $W\in\mf\cap\mathcal{W}$.  Since $W$ is not strongly fat, there exists $Z\in\mm\oplus\mp$ such that $[W,Z]=0$. Notice that $[[A,W]^\mk,Z^\mm]=[0,Z^\mm]=0$, so $A$ does not satisfy property ($iii$).
\item[(B12)] $\mh\subset sp(2)\oplus u(1)\subset su(5)$ gives a bundle over the Berger space $B^{13}$.  By Lemma~\ref{L:nobase}, $\dim(\mm)\in\{1,2\}$.  The only possibility is $\mh=sp(2)$.  This $S^1$-bundle over $B^{13}$ can be ruled out as follows.  Notice that $\mm$ is the center of $\mk$, and the embedding is such that $\mm$ commutes with $\mp_1$.  For any nonzero $Z\in\mm$, $W\in\mp_1$ and $A\in\mp$, we have $[Z,W]=0$ and $[Z,[A,W]^\mk] =0$ because $Z$ lies in the center of $\mk$. Thus condition ($iii$) cannot hold.
\item[(B16)] $\mh\subset sp(n)\oplus \Delta sp(1)\subset sp(n+1)\oplus sp(1)$.  Lemma~\ref{L:nobase} implies that
$\dim(\mm)\in\{1,2\}$.  The only possibility is $\mh=sp(n)\oplus u(1)$, where $u(1) \subset \Delta sp(1)$, which gives an $S^2$-bundle over $S^{4n+1}$.  We now prove that this triple is not positive.  Notice that $\mp=\mp'\oplus\mp_1$, where $\mp_1=sp(n+1)\ominus(sp(n)\oplus sp(1))$ and $\mp'=(sp(1)\oplus sp(1))\ominus\Delta sp(1)$. If $q\in sp(1)$ and $w\in\HH^n\cong\mp_1$, we will use the shorthand ``$(q,-q,w)$'' to denote the element $(q,-q)+w\in\mp'\oplus\mp_1=\mp$.  Denoting $sp(1)=\text{span}\{i,j,k\}$, we can assume without loss of generality that $\mh=\text{span}\{(i,i,0)\}$, so $\mm=\{(q,q,0)\mid q\in\text{span}\{j,k\}\}$.

Let $q\in sp(1)$ and $w\in\HH^n$, so that $A=(q,-q,w)$ represents an arbitrary element of $\mp$.  We will show that $A$ does not satisfy property ($iii$) of the definition of positive triple.  For any $p\in\text{span}\{j,k\}$ and any $v\in\HH^n$, notice that $W=(0,0,v)\in\mp$ will commute with $Z=(p,0,v)\in\mm\oplus\mp$.  Notice that $Z^\mm=\frac 12 (p,p,0)$ and $[A,W]^k=[(0,0,w),(0,0,v)]^\mk$.  Thus, $[Z^\mm,[A,W]^\mk]=0$ if and only if $p$ is parallel to the projection of $[w,v]$ onto $sp(1)$, where the Lie bracket $[w,v]$ is being taken in $sp(n+1)$.  We first choose $v$ to be orthogonal to $i\cdot w$, which ensures that the projection of $[w,v]$ onto $sp(1)$ is orthogonal to $i$.  We then can choose $p$ to equal this projection, thus ensuring that $[Z^\mm,[A,W]^\mk]=0$.

\end{enumerate}
\end{proof}

\begin{lem}
Theorem~\ref{T:Main} is true if the fiber pair is reduced.
\end{lem}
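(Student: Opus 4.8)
The plan is to run the same kind of case analysis as in Lemma~\ref{allredu}, but now organized by \emph{fiber type}. Since the fiber pair $\mh\subset\mk$ is reduced, Proposition~\ref{list} forces it (with trivial $\ma$) to coincide with one of the sixteen pairs of Table~\ref{positive_pairs}; in particular $\mk$ is isomorphic to the larger Lie algebra appearing in that row. Moreover we may assume the base pair $\mk\subset\mg$ is \emph{not} reduced, since otherwise we are already in the situation of Lemma~\ref{allredu}. Applying Proposition~\ref{list} to $\mk\subset\mg$ then gives a pair-isomorphism onto $\mk'\oplus\ma\subset\mg'\oplus\ma$ with $\ma\neq\{0\}$ a compact Lie algebra and $\mk'\subset\mg'$ one of the sixteen pairs, whose number (the base type) is restricted by Lemma~\ref{L:nobase}: it is not $2,3,4,13,14,15$, and $\dimm(\mm)$ is forced to be small when the base type is $8$, $12$ or $16$.

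The first step is a bookkeeping reduction. The algebra $\mk$ must simultaneously be the larger algebra of its fiber row \emph{and} decompose as $\mk\cong\mk'\oplus\ma$ with $\ma\neq\{0\}$ and $\mk'$ the smaller algebra of an allowed base row; in particular $\mk$ must be reducible. The only reducible ``larger'' algebras in Table~\ref{positive_pairs} are $su(3)\oplus so(3)$, $u(n+1)$, $sp(n+1)\oplus u(1)$, $sp(n+1)\oplus sp(1)$, and $so(4)\cong sp(1)\oplus sp(1)$ (the $n=3$ instance of row~1). For each, I would list the ideal splittings $\mk=\mk'\oplus\ma$ with $\mk'$ an allowed base ``smaller'' algebra; reducedness of the \emph{triple} then discards every splitting in which the common ideal $\ma$ of $\mk$ and $\mg$ is contained in $\mh$. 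What survives is a short, explicit list of candidate triples $\mh\subset\mk\subset\mg$, each with $\mg$ determined and with only finitely many admissible embeddings of $\mh$ (the type-$14$ case carrying the usual $(k,l)$-family).

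For each surviving candidate I would decide positivity exactly as in Section~3 and in Lemma~\ref{allredu}. Most candidates die immediately by a dimension count: because $\mk$ has picked up an extra simple or abelian summand, either $\mp$ acquires a trivial isotropy factor (excluded by Lemma~\ref{L:triv}) or $\dimm(\mm)\geq\dimm(\mp_i)$ for some irreducible component $\mp_i$ (excluded by Lemma~\ref{L:dim}, or by Lemma~\ref{L:dimsplit} when an intermediate subalgebra is present). For the few borderline candidates with $\dimm(\mm)=\dimm(\mp_i)$ — which arise only for base pairs built on $so(4)$ — I would rule out positivity by an argument in the spirit of base types (B11) and (B16): a hypothetical $A\in\mp$ satisfying property ($iii$) would be forced to avoid being parallel to an entire one-parameter family of vectors $W\in\mp$, which is impossible. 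For the candidates that do survive, I would confirm positivity by exhibiting a fat vector in $\mp$ (via Proposition~\ref{help} when the isotropy action of $K$ on $\mp$ is transitive on its unit sphere), and then match the triple to conclusion (4) of Theorem~\ref{T:Main}: I expect each surviving positive triple to be precisely the output of the Lemma~\ref{L:Cheeger} construction with $A\cong U(1)$ applied to one of the positive triples with reduced base pair classified in Lemma~\ref{allredu} — concretely, the family $u(2)\subset u(3)\subset\mg_2\oplus u(1)$, which is the $U(1)$-extension of the positive triple $SU(2)\subset SU(3)\subset G_2$.

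The main obstacle is not any single computation but keeping the enumeration honest: correctly matching $\mk$ against both roles in Table~\ref{positive_pairs} while tracking the embedding of $\mh$ (noting which embeddings make the fiber pair reducible, so that the corresponding case is handled elsewhere), and treating the borderline $so(4)$-based cases, where Lemma~\ref{L:Cheeger} would force the trivial parameters $n=k=l=1$, so that no genuinely new example can arise and positivity must be defeated directly. Once the surviving positive triples are recognized as $U(1)$-extensions, they are accounted for by conclusion (4) and the lemma follows.
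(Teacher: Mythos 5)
Your overall route is the same as the paper's: assume (by Lemma~\ref{allredu}) that the base pair is not reduced, so the triple has the form $\mh\subset\mk'\oplus\ma\subset\mg'\oplus\ma$ with $\ma\neq\{0\}$; observe that the reduced fiber pair must then be a pair from Table~\ref{positive_pairs} whose larger algebra is a product (type (1) with $n=3$, or (13), (14), (15), (16)); cross these against the base types allowed by Lemma~\ref{L:nobase}; kill most candidates with Lemmas~\ref{L:triv} and~\ref{L:dim}; and recognize the one surviving positive family, $u(2)\subset u(3)\cong su(3)\oplus u(1)\subset\mg_2\oplus u(1)$, as the Lemma~\ref{L:Cheeger} extension of $su(2)\subset su(3)\subset\mg_2$, hence covered by conclusion (4). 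All of that matches the paper, including your identification of the unique new positive family.

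The gap is in your accounting of which candidates escape the dimension sieve. First, cases with $\dimm(\mm)=\dimm(\mp_i)$ are already excluded by Lemma~\ref{L:dim}, which gives the \emph{strict} inequality $\dimm(\mm)<\dimm(\mp_i)$; so your ``borderline $so(4)$-based'' cases need no extra argument and are not where the difficulty lies. The candidates that genuinely survive Lemmas~\ref{L:triv}, \ref{L:dim} and~\ref{L:dimsplit} but are \emph{not} positive are the ones whose base pair is of type (11), i.e.\ built on $so(3)_{max}\subset so(5)$: namely $\Delta so(3)\subset so(3)_{max}\oplus so(3)\subset so(5)\oplus so(3)$ (fiber type (1), $n=3$) and $u(1)\subset so(3)_{max}\oplus u(1)\subset so(5)\oplus u(1)$ (fiber type (14), $n=1$). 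There $\mp=so(5)\ominus so(3)_{max}$ is irreducible of dimension $7$ with no trivial factor, while $\dimm(\mm)=3$, so no dimension count applies; and since these triples are not positive, your plan to ``confirm positivity by exhibiting a fat vector'' cannot close them, and your proposal supplies no argument that excludes them. The paper handles them by exploiting that every nonzero $X\in\mm$ has nonzero projection onto $so(3)_{max}$: in the first case this means $\mm\oplus\mp$ surjects onto $so(5)$, so (rank of $so(5)$ being $2$) every vector of $\mp$ commutes with something in $\mm\oplus\mp$ independent of itself, contradicting Lemma~\ref{L:fat2}; in the second case the same projection observation reduces positivity to that of $u(1)\subset so(3)_{max}\subset so(5)$, which was already shown non-positive in case (B11) of Lemma~\ref{allredu}. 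You correctly sensed that a ``(B11)-spirit'' argument is needed somewhere, but you attached it to cases that Lemma~\ref{L:dim} already eliminates and omitted it for the $B^7$-based cases where it is actually required; without it the enumeration cannot be completed.
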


\begin{proof}
By the previous Lemma, We can assume that the base pair is not reduced, which means that the triple has the form $$\mh\subset\mk'\oplus\ma\subset\mg'\oplus\ma,$$ where $\ma$ is any compact Lie algebra, and $\mk'\subset\mg'$ is one of the pairs (1) through (16) from Table~\ref{positive_pairs} allowed by Lemma~\ref{L:nobase}.
The fiber pair, $\mh\subset\mk'\oplus\ma$, must be one of the pairs of Table~\ref{positive_pairs} for which the larger algebra is a product: pair (1) when $n=3$, or (13), (14), (15), or (16).  We will now consider each of these possibilities.  The cases below are numbered according to the fiber pair.

\begin{itemize}
\item[(F1)]
 If the fiber pair is the $n=3$ case of (1), then the triple is one of
     \begin{eqnarray*}
     so(3)\subset so(4)\cong so(3)\oplus so(3) & \subset & so(4)\oplus so(3),\\
     so(3)\subset so(4)\cong so(3)_{max} \oplus so(3) & \subset & so(5)\oplus so(3).
     \end{eqnarray*}
The first possibility gives an $S^3$-bundle over $S^3$, which is ruled out by Lemma~\ref{L:dim}.  The second possibility, over $B^7$,  is ruled out as follows.  Every nonzero $X\in\mm$ has a nonzero projection onto $so(3)_{max}$, and therefore commutes with a unique vector in $\mp=so(5)\ominus so(3)_{max}$, so there could not be a fat vector.
\item[(F13)]
When the fiber is $W_{1,1}$, the possibilities for the triple are
\begin{eqnarray*}
u(2)\subset su(3)\oplus so(3) & \subset & \mg_2\oplus so(3), \\
u(2)\subset su(3)\oplus so(3) & \subset & su(3)\oplus so(4), \\
u(2)\subset su(3)\oplus so(3)_{\max} & \subset & su(3)\oplus so(5).
\end{eqnarray*}
These correspond to bundles with fiber $W_{1,1}$ and with base spaces  $S^6$, $S^3$, and $B^7$  respectively, so they are all ruled out by Lemma~\ref{L:dim}.
\item[(F14)]
The fiber pair is $u(n-1)\subset u(n)$ with an \emph{arbitrary} embedding.  The possibilities for the triple are
\begin{eqnarray*}
u(n-1)\subset u(n)\cong su(n)\oplus u(1) & \subset & su(n)\oplus sp(1), \\
u(3)\subset u(4)\cong su(4)\oplus u(1) & \cong & so(6)\oplus u(1)\subset so(7)\oplus u(1),\\
u(2)\subset u(3)\cong su(3)\oplus u(1) & \subset & \mg_2\oplus u(1),\\
u(1)\subset u(2)\cong so(3)\oplus u(1) & \subset & so(4)\oplus u(1),\\
u(1)\subset u(2)\cong so(3)_{max}\oplus u(1) & \subset & so(5)\oplus u(1).
\end{eqnarray*}
The first, second, and fourth possibilities are ruled out by Lemma~\ref{L:dim}.  The third possibility is positive because it is obtained by the method of Lemma~\ref{L:Cheeger} from the positive triple $su(2)\subset su(3)\subset \mg_2$.  The final possibility is rulled out as follows.  Since every vector in $\mm$ has a nonzero projection onto $so(3)_{max}$, if this triple were positive, then the triple $u(1)\subset so(3)_{max}\subset so(5)$ would also be positive, contradicting our previous proof that it is not.
\item[(F15)]
The fiber pair is $sp(n-1)\oplus u(1)\subset sp(n)\oplus u(1)$ with an \emph{arbitrary} embedding.  The possibilities for the triple are
 \begin{eqnarray*}
    sp(1)\oplus u(1)\subset sp(2)\oplus u(1) & \cong & so(5)\oplus u(1)\subset so(6)\oplus u(1),\\
    sp(n-1)\oplus u(1) \subset sp(n)\oplus u(1) & \subset & sp(n)\oplus sp(1).
\end{eqnarray*}
Both of these possibilities are ruled out by Lemma~\ref{L:dim}.
\item[(F16)]
The possibilities for the triple are
 \begin{eqnarray*}
    sp(1)\oplus sp(1) \subset sp(2)\oplus sp(1) & \cong & so(5)\oplus sp(1)\subset so(6)\oplus sp(1),\\
    sp(n-1)\oplus sp(1) \subset sp(n)\oplus sp(1) & \cong & sp(n)\oplus so(3)\subset  sp(n)\oplus so(4), \\
    sp(n-1)\oplus sp(1) \subset sp(n)\oplus sp(1) & \cong  & sp(n)\oplus so(3)_{max}  \subset  sp(n)\oplus so(5).
\end{eqnarray*}
All of these possibilities are all ruled out by Lemma~\ref{L:dim}.
\end{itemize}
\end{proof}

It remains to consider the possibility that neither the fiber pair nor the base pair is reduced.  We divide this remaining case into two lemmas.  The first has an added hypothesis that prevents the fiber and base pair embeddings from twisting together in $\mk$.

\begin{lem}
Theorem~\ref{T:Main} is true if neither the fiber pair nor the base pair is reduced and the dimension of the center of $\mk$ is $0$ or $1$.
\end{lem}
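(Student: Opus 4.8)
By the two preceding lemmas we may assume that neither the fiber pair nor the base pair is reduced. Applying Proposition~\ref{list} to the base pair, write the triple as
$$\mh\subset\mk=\mk'\oplus\ma\subset\mg=\mg'\oplus\ma,$$
with $\ma\neq\{0\}$ compact and $\mk'\subset\mg'$ one of the pairs of Table~\ref{positive_pairs} permitted by Lemma~\ref{L:nobase}; then $\mp=\mg\ominus\mk=\mg'\ominus\mk'$, and $\ma$ commutes with $\mp$. Since the whole triple is reduced, $\ma\not\subseteq\mh$, for otherwise $\ma$ would be a nontrivial ideal of $\mg$ lying in all three algebras.

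The key first step is to show that $\mm\cap\ma=\{0\}$. Suppose $0\neq Z\in\mm\cap\ma$. Since $\mp\cap\ma=\{0\}$, every $W\in\mp$ is linearly independent from $Z$, and $[Z,W]=0$ because $Z$ commutes with $\mg'\supseteq\mp$. On the other hand $[A,W]\in[\mp,\mp]\subseteq\mg'$, so $[A,W]^\mk\in\mk'$, whence $[Z^\mm,[A,W]^\mk]=[Z,[A,W]^\mk]=0$ because $Z$ commutes with $\mg'\supseteq\mk'$. This contradicts property (iii), proving the claim. Consequently the projection $\pi\colon\mg=\mg'\oplus\ma\to\mg'$ is a Lie algebra homomorphism that is injective on $\mm$ and the identity on $\mp$.

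Next I would bound the possibilities. Combining $\mm\cap\ma=\{0\}$ with the bounds $\dim(\mm)<\dim(\mp_i)$ for each isotropy-irreducible summand $\mp_i\subset\mp$ (Lemma~\ref{L:dim}), the sharper bound of Lemma~\ref{L:dimsplit} when $\mk'\subset\mg'$ has an intermediate subalgebra, and the restrictions of Lemma~\ref{L:nobase} (so that the base type lies in $\{1,5,6,7,8,9,10,11,12,16\}$), one finds that $\ma$ and $\mm$ are small in every case. I would then go through these base types one at a time. In each, $\mm$ has nonzero projection onto $\ma$ but trivial intersection with it, so $\mm$ genuinely links the summands $\mk'$ and $\ma$; and since the center of $\mk=\mk'\oplus\ma$ has dimension at most one (a diagonal circle in $u(1)\oplus u(1)$ would require a two-dimensional center), while a diagonally embedded semisimple ideal is excluded by the dimension bounds, the only linkage possible is a gluing along a single circle. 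Using this one pins down the embedding of $\mh$ in $\mk'\oplus\ma$ up to conjugacy. Every resulting triple is then either eliminated by the dimension bounds — or, in a few cases, by exhibiting vectors $Z,W$ that violate property (iii), as in several of the arguments in the proof of Lemma~\ref{allredu} — or else is recognized as the output of Lemma~\ref{L:Cheeger} applied to one of the positive triples already classified in Sections~2 and~3, hence is positive and falls under case (4) of Theorem~\ref{T:Main}. This exhausts all possibilities.

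The bulk of the work, and the main obstacle I anticipate, is the last step: verifying that each surviving configuration really is a Lemma~\ref{L:Cheeger} output. Concretely one must check that $\pi(\mh)\subset\mk'\subset\mg'$ is itself a positive triple — condition (i) is immediate since $\mp$ is unchanged, and conditions (ii) and (iii) follow from the bracket-compatibility of $\pi$ recorded above, provided $\pi(\mh)\subsetneq\mk'$, the degenerate case $\pi(\mh)=\mk'$ being ruled out by the dimension bounds — and that the data of Lemma~\ref{L:Cheeger} (the compact group $A$, necessarily $U(1)$ unless $n=k=l=1$, together with the integers $k,l$) can be chosen to reproduce $\mh$. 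Granting this structural point, the remaining case-by-case bookkeeping — which in base type $1$, and to a lesser extent types $7$ and $9$, breaks into several sub-subcases according to the rank parameter and the isomorphism type of $\ma$ — is routine.
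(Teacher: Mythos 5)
Your opening is sound: reducing the base pair via Proposition~\ref{list} to $\mh\subset\mk'\oplus\ma\subset\mg'\oplus\ma$ and then showing $\mm\cap\ma=\{0\}$ from property (iii) (with $A$ taken in $\mp$) is correct, and is consistent with what the paper does. But the heart of your argument has a genuine gap, in two respects. First, you never apply Proposition~\ref{list} to the (non-reduced) \emph{fiber} pair $\mh\subset\mk$. That is what makes the enumeration finite in the paper: the fiber pair must be a Table~\ref{positive_pairs} pair whose larger algebra is a product (type (1) with $n=3$, or (13)--(16)), and the hypothesis $\dim(\text{center of }\mk)\leq 1$ is used precisely to show that the fiber pair's common ideal $\ml$ and the base pair's common ideal $\ma$ are \emph{orthogonal} ideals of $\mk$ (no twisting between them), so the triple takes the form $\mh'\oplus\ml\subset\mk'\oplus\ml\oplus\ma\subset\mg'\oplus\ma$. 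Your substitute for this --- ``dimension bounds pin down the embedding of $\mh$ up to conjugacy'' --- is not an argument; without the fiber-pair classification there is no finite list to check.

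Second, your key structural claim, that a diagonally embedded semisimple ideal is excluded and ``the only linkage possible is a gluing along a single circle,'' is false, and following it would discard genuine positive triples of exactly this case. For example, $\Delta so(3)\oplus sp(m)\subset so(3)\oplus(sp(1)\oplus sp(m))\subset so(3)\oplus sp(m+1)$ (here $G/H=S^{4m+3}$), $\Delta so(3)\oplus u(1)\subset so(3)\oplus u(2)\subset so(3)\oplus su(3)$ (here $G/H=W_{1,1}$), and the fiber-type-(16) triple $sp(1)\oplus sp(n)\oplus\Delta sp(1)\subset sp(1)\oplus sp(n+1)\oplus sp(1)\subset sp(1)\oplus sp(n+2)$ (obtained from Triple~\ref{T4} by Lemma~\ref{L:Cheeger} with $A\cong Sp(1)$) all link $\mk'$ and $\ma$ by a diagonal $so(3)$ or $sp(1)$, not a circle, and none is ruled out by the dimension lemmas. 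Relatedly, not every survivor is a Lemma~\ref{L:Cheeger} output of a Section 2--3 triple: the $W_{1,1}$ and sphere examples fall under case (1) of Theorem~\ref{T:Main} (and $W_{1,1}$ cannot be produced by Lemma~\ref{L:Cheeger}, since the relevant covering $U(1)\times SU(2)\ra U(2)$ has $n=2$ with $A\not\cong U(1)$). Finally, the ``converse Cheeger'' step you flag as the main obstacle --- that $\pi(\mh)\subset\mk'\subset\mg'$ is itself positive and that $\mh$ is reproduced by the lemma's data --- is exactly the part you leave unproven, so even on its own terms the proposal is incomplete; the paper avoids it entirely by enumerating fiber types against base types and recognizing each survivor directly.
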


\begin{proof}
The triple has the form
\begin{equation}\label{HP1}
\mh'\oplus\ml\subset\tilde{\mk}''\oplus\ml\cong\tilde{\mk}'\oplus\ma\subset\mg'\oplus\ma,
\end{equation}
where $\ml$ and $\ma$ are compact Lie algebras, and where the pairs $\mh'\subset\tilde{\mk}''$ and $\tilde{\mk}'\subset\mg'$ come from Table~\ref{positive_pairs}, (1) through (16).

Since the triple is reduced, we know that $\ma$ and $\ml$ intersect trivially in $\mk\cong\tilde{\mk}''\oplus\ml\cong\tilde{\mk}'\oplus\ma$.  Consider the decomposition of $\mk$ into a sum of simple ideals plus a Euclidean factor, $\R^l$, which is the center of $\mk$.  By hypothesis, we know that $l\leq 1$, so $\mk$ decomposes uniquely into a sum of ideals each of which is simple or is $\R^1$.  Since $\ma$ and $\ml$ are ideals of $\mk$ that intersect trivially in $\mk$, each must be a sum of a subcollection of these ideals into which $\mk$ decomposes, and the two subcollections must be disjoint.  This implies that the triple has the form
\begin{equation}\label{HP2}\mh'\oplus\ml\subset\mk'\oplus\ml\oplus\ma\subset\mg'\oplus\ma,
\end{equation}
where $\ml$ and $\ma$ are compact Lie algebras, and where the pairs $\mh'\subset\mk'\oplus\ma$ and $\mk'\oplus\ml\subset\mg'$ come from Table~\ref{positive_pairs}, (1) through (16).  If $\mk'=\{0\}$, then the triple could not be positive, since in this case $\mp$ would lie in the first factor of $\mg=\mg'\oplus\ma$ (that is, $\mp\subset\mg'$) while $\mm$ would lie in the second factor (that is, $\mm\subset\ma$).  We therefore assume that $\mk'$ is nontrivial.

The fiber pair, $\mh'\subset\mk'\oplus\ma$, is one of the pairs on the list for which the larger Lie algebra is a product: case (1) when $n=3$  or one of the pairs (13), (14), (15),  (16).  Notice that $\text{dim}(F)\geq 3$ for each of these possibilities.  The base pair, $\mk'\oplus\ml\subset\mg'$, is one of the pairs on the list for which the smaller Lie algebra is a product.  The only such pairs allowed by Lemma~\ref{L:nobase} are: case (1) when $n=4$ or one of the pairs (7), (9).

We now consider each of these possibilities for the fiber pair, and for each one, we consider each compatible possibility for the base pair.

\begin{enumerate}
\item[(F1)]
The triple has the form
$$\Delta so(3)\oplus\ml \subset so(3)\oplus so(3)\oplus \ml\subset so(3)\oplus \mg',$$
so the base pair has the form $so(3)\oplus\ml\subset\mg'$.  The possibilities are
\begin{itemize}
\item[(B7)] $\Delta so(3) \oplus u(1) \subset so(3)\oplus so(3)\oplus u(1)\cong so(3)\oplus u(2)\subset so(3)\oplus su(3)$ is the positive triple for $G/H=W_{1,1}$ considered as an $S^3$-bundle over $\CP^2$.
\item[(B1)] $\Delta so(3) \oplus so(3) \subset (so(3) \oplus so(3)) \oplus so(3) \cong so(3) \oplus (so(3) \oplus so(3)) \subset so(3) \oplus so(5)$.  This is the $m=1$ case of the following triple:
\item[(B9)]  $\Delta so(3) \oplus sp(m) \subset (so(3) \oplus so(3)) \oplus sp(m) \cong so(3) \oplus (sp(1) \oplus sp(m)) \subset so(3) \oplus sp(m+1)$.  This is a positive triple because $G/H$ has positive curvature.  In fact, $G/H=S^{4m+3}$ considered as an $S^3$-bundle over over $\HP^m$, and the pair $\mh\subset\mg$ is of type (16).
\end{itemize}

\item[(F13)] The triple has the form
$$u(2)\oplus\ml\subset su(3)\oplus so(3)\oplus \ml\subset \mg'\oplus\ma,$$
where $\ma\in\{su(3),so(3)\}$, so the base pair has the form $so(3)\oplus\ml\subset\mg'$ or $su(3)\oplus\ml\subset\mg'$.
Here the fiber is 7-dimensional, so Lemma~\ref{L:dim} rules out base types (1) and (7).  The only remaining compatible possibility is
\begin{itemize}
\item[(B9)] $u(2)\oplus sp(m) \subset su(3) \oplus so(3)\oplus sp(m)\cong su(3) \oplus (sp(1) \oplus sp(m))
\subset su(3) \oplus sp(m+1)$.  We know that  $\mp$ lies in second ideal of $\mg$ ($\mp \subset sp(m+1)$). Further, dimension counting establishes that $\mm$ has a nontrivial intersection with the first ideal of $\mg$
($\mm\cap su(3)\neq\{0\}$).  Therefore, $\mp$ does not contain any fat vectors.
\end{itemize}

\item[(F14)]  The triple has the form
$$u(n)\oplus\ml \subset u(n+1)\oplus\ml\cong su(n+1)\oplus u(1)\oplus\ml\subset\mg'\oplus\ma,$$
where $n\geq 1$ and $\ma\in\{u(1),su(n+1)\}$, so the base pair has the form $su(n+1)\oplus\ml\subset\mg'$ or
$u(1)\oplus\ml\subset\mg'$.  In the fiber pair, there are infinitely many possible embedding of $u(n)$ in $u(n+1)$.
We first assume that $\ma=su(n+1)$, so the base pair has the form $u(1)\oplus\ml\subset\mg'$.  In this case, the only compatible possibility for the base type is
\begin{itemize}
\item[(B7)]  $u(n) \oplus su(m) \subset su(n+1) \oplus u(1) \oplus su(m)   \subset su(n+1) \oplus su(m+1)$. Notice, $\mp$ lies in the second ideal of $\mg$ ($\mp\subset su(m+1)$).  Further, there exists $X\in\mm$ such that $X$ lies in the first factor of $\mg$ ($X\in su(n+1)$) because $X$ is orthogonal to the $u(1)$-factor of $\mk$.  Therefore, $\mp$ cannot contain a strongly fat vector.
\end{itemize}

We next assume that $\ma = u(1)$, so the base pair has the form $su(n+1)\oplus\ml\subset\mg'$.  The possibilities are
\begin{itemize}
\item[(B7)]  $u(n) \oplus u(1) \subset (u(1) \oplus su(n+1)) \oplus u(1) \cong u(1) \oplus (su(n+1) \oplus u(1)) \subset u(1) \oplus su(n+2)$. This is a positive triple obtained from Triple~\ref{T5p} by the method of Lemma~\ref{L:Cheeger}.

\item[(B1)] $u(1) \oplus so(3) \subset (u(1) \oplus su(2)) \oplus so(3) \cong u(1) \oplus (so(3) \oplus so(3))
\subset u(1) \oplus so(5)$. This is the $m=1$ case of the following triple:

\item[(B9)] $u(1) \oplus sp(m) \subset u(2) \oplus sp(m) \cong u(1) \oplus (sp(1) \oplus sp(m)) \subset u(1) \oplus sp(m+1)$.  This is a positive triple because $G/H$ has positive curvature.  In fact, $G/H= S^{4n+1}$ considered as an $S^3$-bundle over $\HP^m$, and the pair $\mh\subset\mg$ is of type (15).
\end{itemize}

\item[(F15)] The triple has the form
$$sp(n)\oplus u(1)\oplus\ml\subset sp(n+1)\oplus u(1)\oplus l\subset\mg'\oplus\ma,$$
where $n\geq 1$ and $\ma\in\{sp(n+1),u(1)\}$, so the base pair has the form $u(1)\oplus\ml\subset\mg'$ or the form $sp(n+1)\oplus\ml\subset\mg'$.  The fiber is $S^{4n+3}$, which has dimension at least $7$.  The embedding of the fiber pair has an arbitrary slope.  We assume first that $\ma=u(1)$, so the base pair has the form $sp(n+1)\oplus\ml\subset\mg'$.  In this case, the only compatible possibility for the base type that is not ruled out by Lemma~\ref{L:dim} is the following:
\begin{itemize}
\item[(B9)] $sp(n)\oplus u(1) \oplus sp(1) \subset sp(n+1)\oplus u(1) \oplus sp(1) \subset  u(1)\oplus  sp(n+2)$.   This is a positive triple obtained from Triple~\ref{T4} by the method of Lemma~\ref{L:Cheeger}.
\end{itemize}
We next assume that $\ma=sp(n+1)$, so the base pair has the form $u(1)\oplus\ml\subset\mg'$.  In this case, the only compatible possibility for the base type that is not ruled out by Lemma~\ref{L:dim} is the following:
\begin{itemize}
\item[(B7)] $sp(n)\oplus u(1) \oplus su(m) \subset sp(n+1) \oplus u(1) \oplus su(m) \subset sp(n+1)\oplus su(m+1)$.
This triple has no strongly fat vector because $\mp$ is contained in the second ideal of $\mg$ ($\mp \subset su(m+1)$) while $\mm$ intersects nontrivially with the first ideal of $\mg$ ($\mm\cap sp(n+1)\neq\emptyset$).
\end{itemize}

\item[(F16)]  The triple has the form
$$sp(n) \oplus sp(1)\oplus\ml \subset sp(n+1) \oplus sp(1)\oplus\ml\subset\mg'\oplus\ma,$$
where $n\geq 1$ and $\ma\in\{sp(1),sp(n+1)\}$, so the base pair has the form either $sp(n+1)\oplus\ml\subset\mg'$ or  $sp(1)\oplus\ml\subset\mg'$.  The fiber is $S^{4n+3}$, which has dimension at least $7$.
We first assume that $\ma=sp(n+1)$, so the base pair has the form $sp(1)\oplus\ml\subset\mg'$.  In this case, the only compatible possibility for the base type that is not ruled out by Lemma \ref{L:dim} is
\begin{itemize}
\item[(B9)] $sp(n) \oplus sp(1) \oplus sp(m) \subset sp(n+1) \oplus sp(1) \oplus sp(m) \subset sp(n+1) \oplus sp(m+1)$.  This triple has no fat vector because $\mp$ is contained in the second ideal of $\mg$ ($\mp \subset sp(m+1)$) while $\mm$ intersects nontrivially with the first ideal of $\mg$ ($\mm\cap sp(n+1)\neq\emptyset)$.
\end{itemize}
Next, assume that $\ma=sp(1)$, so the base pair has the form $sp(n+1)\oplus\ml\subset\mg'$. In this case, all compatible possibilities for the base type are ruled out by Lemma \ref{L:dim} except 
\begin{itemize}
\item[(B9)] $sp(1) \oplus sp(n)\oplus sp(1) \subset sp(1)\oplus sp(n+1) \oplus sp(1) \subset  sp(1)\oplus  sp(n+2)$.   This is another positive triple obtained from Triple~\ref{T4} by the method of Lemma~\ref{L:Cheeger}.
\end{itemize}
\end{enumerate}
\end{proof}

\begin{lem}
Theorem~\ref{T:Main} is true if neither the fiber pair nor the base pair is reduced.
\end{lem}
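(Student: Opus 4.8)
The plan is to handle the one remaining configuration: neither the fiber pair nor the base pair is reduced, and (by the previous lemma) the center of $\mk$ has dimension at least $2$. Following the notation of equation (\ref{HP1}), the triple has the form $\mh'\oplus\ml\subset\tilde\mk''\oplus\ml\cong\tilde\mk'\oplus\ma\subset\mg'\oplus\ma$, where the two identifications of $\mk$ need not respect a common decomposition into ideals, because the abelian part of $\mk$ can now be "tilted": a $u(1)$ summand of $\tilde\mk''$ need not lie in $\ml$ or in $\tilde\mk'$, but rather at some slope inside a higher-rank torus. This is exactly the twisting phenomenon already catalogued in the discussion of non-conjugate embeddings (items (14), (15) of Table~\ref{positive_pairs}) and packaged abstractly in Lemma~\ref{L:Cheeger}.

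First I would use Lemmas~\ref{L:nobase}, \ref{L:dim} and \ref{L:dimsplit} to cut the list of possible base pairs $\tilde\mk'\subset\mg'$ down to those whose smaller algebra is a product \emph{and} whose center is large enough to accommodate the tilting — realistically only the pairs (7), (9), possibly (8), (12), (16), and the $n=3,4$ cases of (1), with the dimension inequalities further restricting $\dim(\mm)$. In parallel, the fiber pair $\mh'\subset\tilde\mk''$ must have $\tilde\mk''$ a product, so it is one of: (1) with $n=3$, or (13), (14), (15), (16); since $\dim(F)\geq 3$ in all of these, Lemma~\ref{L:dim} already kills many base types. For each surviving (fiber type, base type) pair I would write the triple out explicitly, identify which ideals of $\mk$ lie in $\mp$-producing position versus $\mm$-producing position, and apply the standard dichotomy used throughout Lemma~\ref{allredu}: either $\mp$ lies in one ideal of $\mg$ while $\mm$ meets another ideal (so no fat vector exists, killing the triple), or the triple is genuinely positive — in which case I expect it to be exactly one obtained from a triple of Section~2 or Theorem~\ref{T:E} by the twisting construction of Lemma~\ref{L:Cheeger}, with $A\cong U(1)$ and integer slope parameters $k,l$ coming precisely from the tilt of the central circle. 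Thus every such triple falls under case (4) of Theorem~\ref{T:Main}.

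The main obstacle is bookkeeping the twisted embeddings: when the center of $\mk$ has rank $\geq 2$, one must argue that the ideal decomposition of $\mk$ as "$\tilde\mk''\oplus\ml$" and as "$\tilde\mk'\oplus\ma$" can be reconciled after conjugation up to the residual freedom in the circle slopes — i.e.\ that any such twist is precisely the $B=\{(\rho(h,a^l),a^k)\}$ of Lemma~\ref{L:Cheeger} applied to a reduced-base or reduced-fiber triple already classified, and never produces anything new. I would make this precise by decomposing $\mk$ into simple ideals plus its central torus $\mathbb R^l$, noting that the simple ideals are forced into $\ml$ or $\ma$ as before, so all the ambiguity lives in $\mathbb R^l$; then the slope data is a homomorphism from a circle into a $2$-torus, which is exactly the $(k,l)$ of Lemma~\ref{L:Cheeger}. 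Once that reduction is in place, each remaining case is dispatched either by the "$\mp$ in one ideal, $\mm$ in another" no-fat-vector argument or by direct citation of Lemma~\ref{L:Cheeger}, completing the proof of Theorem~\ref{T:Main}.
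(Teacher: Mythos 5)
Your overall strategy is the paper's: split off the case where $\ml$ and $\ma$ are orthogonal inside $\mk$ (which is Equation~(\ref{HP2}) and is covered by the previous lemma), observe that the simple ideals of $\mk$ are rigid so any twisting is confined to the center, and then run a fiber-type/base-type case check using the ``$\mp$ in one ideal, $\mm$ meeting another'' no-fat-vector argument together with Lemma~\ref{L:Cheeger}. However, two points in the plan need repair. First, the pivotal claim that the twist is ``a homomorphism from a circle into a $2$-torus'' is asserted rather than derived: in the non-orthogonal case one must note that each of $\tilde{\mk}'$, $\tilde{\mk}''$, $\ma$, $\ml$ is forced to have a nontrivial center, that every eligible table pair then has center of dimension exactly one (base types (1) with $n=2$, (7), (8), (12); fiber types (1) with $n=1$, (14), (15)), and that reducedness pins the centers of $\ma$ and $\ml$ to dimension one as well; only after that is the center of $\mk$ a $2$-torus and the twist a single slope. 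Relatedly, your candidate list of base pairs (including (9), (16), and the $n=3,4$ cases of (1)) conflates the criterion relevant to the orthogonal case (smaller algebra a product) with the criterion relevant here (smaller algebra with nontrivial center), so it admits pairs that cannot occur in the twisted situation.

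Second, the statement that any such twist ``never produces anything new'' is precisely what the enumeration has to establish, and it is not automatic: after the reduction one must still split according to which of the semisimple pieces $\ma_0$, $\ml_0$, $\mk'$ (the complement of $\ma_0\oplus\ml_0$ in the semisimple part of $\mk$) are trivial, and several configurations turn out \emph{not} to be positive --- e.g.\ $\ma_0$ nontrivial with $\mk'$ trivial admits no fat vector, the base-type-(12) member of the lens-space family fails because the underlying untwisted triple is not positive, and the fiber-(14)/(15) cases over base types (8) and (12) are excluded by Lemma~\ref{L:nobase} --- while among the positive outcomes one is identified directly as Triple~\ref{T5} (a Section~2 triple) rather than only as a Lemma~\ref{L:Cheeger} output. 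So the architecture of your plan matches the paper, but the deferred bookkeeping --- the explicit list of surviving configurations and a positivity or non-positivity verdict for each --- is where essentially all of the proof's content lies, and without it the lemma is not yet proved.
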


\begin{proof}
As in the proof of the previous lemma, the triple has the form
\begin{equation}
\mh'\oplus\ml\subset\tilde{\mk}''\oplus\ml\cong\tilde{\mk}'\oplus\ma\subset\mg'\oplus\ma,
\end{equation}
where $\ml$ and $\ma$ are compact Lie algebras, and where the pairs $\mh'\subset\tilde{\mk}''$ and $\tilde{\mk}'\subset\mg'$ are from Table~\ref{positive_pairs}, (1) through (16).

Since the triple is reduced, we know that $\ma$ and $\ml$ intersect trivially in $\mk\cong\tilde{\mk}''\oplus\ml\cong\tilde{\mk}'\oplus\ma$.  If $\ml$ and $\ma$ are orthogonal in $\mk$, then the triple has the form of Equation~\ref{HP2}, and was therefore already considered in the previous lemma.  It remains to consider the possibility that $\ml$ and $\ma$ are not orthogonal, which is only possible when each of $\ml$, $\ma$, $\tilde{\mk}'$ and $\tilde{\mk}''$ has a nontrivial center.

The base pair $\tilde{\mk}'\subset\mg'$ is a pair from Table~\ref{positive_pairs} for which the smaller Lie algebra has a nontrivial center.  The only such pairs compatible with Lemma~\ref{L:nobase} are (1) with $n=2$, (7), (8), or (12).  For each of these possibilities, the center of $\tilde{\mk}'$ is one-dimensional.  Meanwhile, the fiber pair $\mh'\subset\tilde{\mk}''$ is a pair from Table~\ref{positive_pairs} for which the larger Lie algebra has a nontrivial center; namely (1) with $n=1$, (14), or (15).  For each of these possibilities, the center of $\tilde{\mk}''$ is one-dimensional.

Since $\tilde{\mk}'$ and $\tilde{\mk}''$ have one-dimensional centers, we know that the centers of $\ma$ and $\ml$ have the same dimension, and since the triple is reduced, their common dimension must be $1$.  Thus, we can write $\ma=\ma_0\oplus u(1)$ and $\ml=\ml_0\oplus u(1)$, where $\ma_0$ and $\ml_0$ are (possibly trivial) semisimple ideals.  Let $\mk'$ denote the orthogonal compliment of $\ma_0\oplus\ml_0$ inside of the semisimple part of $\mk$.  We now have $\mk$ decomposed as
$$\mk = \ma_0\oplus u(1)\oplus u(1)\oplus\ml_0\oplus \mk'.$$
We can assume without loss of generality that $\ma$ equals $\ma_0$ plus the first $u(1)$-factor, while $\ml$ equals $\ml_0$ plus a circle factor that is embedded diagonally into $u(1)\oplus u(1)$ with some nonzero slope.

We now have that our triple is of the following form:
$$\Delta u(1)\oplus\ml_0\oplus\mh' \subset\ma_0\oplus u(1)\oplus u(1)\oplus\ml_0\oplus \mk' \subset
  \ma_0\oplus u(1)\oplus \mg',$$
where $\Delta u(1)$ denotes an embedding of $u(1)$ in $u(1)\oplus u(1)$ with arbitrary slope.
Here, the fiber pair is $\mh'\subset \ma_0\oplus \Delta^\perp u(1)\oplus\mk'$, where $\Delta^\perp u(1)$ denotes the orthogonal compliment of $\Delta u(1)$ in $u(1)\oplus u(1)$.  For this fiber pair to match one of the pairs of Table~\ref{positive_pairs}, either $\ma_0$ or $\mk'$ must be trivial.  Meanwhile, the base pair is $u(1)\oplus \ml_0\oplus\mk'\subset\mg'$.  For this base pair to match one of the pairs of Table~\ref{positive_pairs}, either $\ml_0$ or $\mk'$ must be trivial.
In summary, at least one of $\{\mk',\ma_0\}$ is trivial and at least one of $\{\mk',\ml_0\}$ is trivial. 

Suppose first that $\mk'$ is trivial and $\ma_0$ is nontrivial.  In this case, $\mp$ lies in the $\mg'$-factor of $\mg=\ma_0\oplus u(1)\oplus\mg'$, while $\mm$ has only a $1$-dimensional projection onto this $\mg'$-factor.  In all of the previously listed possibilities for the fiber pair, $\mm$ contains vectors orthogonal to this $1$-dimensional projection.  These vectors commute with $\mp$, so the triple has no fat vectors and therefore could not be not positive.

Suppose next that both $\mk'$ and $\ma_0$ are trivial, so the triple has the form
\begin{equation}\label{squijim}\Delta u(1)\oplus\ml_0\subset u(1)\oplus u(1)\oplus\ml_0\subset u(1)\oplus \mg'.\end{equation}
There are several possibilities, depending on the type of the base pair, $u(1)\oplus\ml_0\subset\mg'$:
\begin{enumerate}
\item[(B1)] $\Delta u(1)\subset u(1)\oplus u(1)\cong u(1)\oplus so(2)\subset u(1)\oplus so(3),$
\item[(B7)] $\Delta u(1)\oplus su(n)\subset u(1)\oplus u(1)\oplus su(n)\subset u(1)\oplus su(n+1),$
\item[(B8)] $\Delta u(1)\oplus sp(n)\subset u(1)\oplus u(1)\oplus sp(n)\subset u(1)\oplus sp(n+1),$
\item[(B12)] $\Delta u(1)\oplus sp(2)\subset u(1)\oplus u(1)\oplus sp(2)\subset u(1)\oplus su(5),$
\end{enumerate}
Notice that Triple~\ref{squijim} is obtained from the following triple via the method of Lemma~\ref{L:Cheeger}:
\begin{equation}\label{squijim2}\ml_0\subset u(1)\oplus\ml_0\subset \mg'.\end{equation}
The first three triples above (base types (1), (7), and (8)) are positive triples because, in these cases, Triple~\ref{squijim2} is a positive triple.   However, the total spaces are all lens spaces, as explained at the end of Section 4.  The remaining triple above (base type (12)) is not positive, which is straightforward to conclude using that fact that, in this case, Triple~\ref{squijim2} is not a positive triple, as proven in Lemma~\ref{allredu}.

It remains to consider the case where $\mk'$ is nontrivial, in which case both $\ml_0$ and $\ma_0$ must be trivial, so the triple is of the form
$$\Delta u(1)\oplus\mh' \subset u(1)\oplus u(1)\oplus \mk' \subset
 u(1)\oplus \mg'.$$

Fiber type (1) is not compatible.  When the fiber type is (14), this becomes $$\Delta u(1)\oplus u(n) \subset u(1)\oplus u(1)\oplus su(n+1) \subset u(1)\oplus \mg'.$$  The compatible possibilities for the base type are
\begin{enumerate}
\item[(B7)] $\Delta u(1)\oplus u(n) \subset u(1)\oplus u(1)\oplus su(n+1) \subset u(1)\oplus su(n+2),$
\item[(B8)] $\Delta u(1)\oplus u(1) \subset u(1)\oplus u(1)\oplus su(2)\cong u(1)\oplus u(1)\oplus sp(1) \subset u(1)\oplus sp(2),$
\end{enumerate}
From this list, base type (7) is positive Triple~\ref{T5}, while base type (8) is ruled out by Lemma~\ref{L:nobase}.

When the fiber type is (15), this becomes
$$\Delta u(1)\oplus sp(n)\oplus u(1) \subset u(1)\oplus u(1)\oplus sp(n+1) \subset u(1)\oplus \mg'.$$  The compatible possibilities for the base type are
\begin{enumerate}
\item[(B8)] $\Delta u(1)\oplus sp(n)\oplus u(1) \subset u(1)\oplus u(1)\oplus sp(n+1) \subset u(1)\oplus sp(n+2)$,
\item[(B12)] $\Delta u(1)\oplus sp(1)\oplus u(1) \subset u(1)\oplus u(1)\oplus sp(2) \subset u(1)\oplus su(5)$.
\end{enumerate}
Both are ruled out by Lemma~\ref{L:nobase}.
\end{proof}


\bibliographystyle{amsplain}

\end{document}